\numberwithin{equation}{section}
\providecommand{\U}[1]{\protect\rule{.1in}{.1in}}
\providecommand{\U}[1]{\protect \rule{.1in}{.1in}}
\newtheorem{theorem}{Theorem}[section]
\newtheorem{definition}[theorem]{Definition}
\newtheorem{lemma}[theorem]{Lemma}
\newtheorem{proposition}[theorem]{Proposition}
\newtheorem{remark}[theorem]{Remark}
\newtheorem{assumption}[theorem]{Assumption}
\newenvironment{proof}[1][Proof]{\noindent \textbf{#1.} }{\  \rule{0.5em}{0.5em}}
\def \E{\mathsf{E}}
\def \P{\mathsf{P}}
\begin{document}
	\title{Backward Stochastic Differential Equations with Double Mean Reflections}
	\author{ 	Hanwu Li\thanks{Research Center for Mathematics and Interdisciplinary Sciences, Shandong University, Qingdao 266237, Shandong, China. lihanwu@sdu.edu.cn.}
	\thanks{Frontiers Science Center for Nonlinear Expectations (Ministry of Education), Shandong University, Qingdao 266237, Shandong, China.}}
	\date{}
	\maketitle
	
	\begin{abstract}
        In this paper, we study the backward stochastic differential equation (BSDE) with two nonlinear mean reflections, which means that the constraints are imposed on the distribution of the solution but not on its paths. Based on the backward Skorokhod problem with nonlinear constraints, we obtain the existence and uniqueness result by constructing a contraction mapping. When the constraints are linear, the solution can be approximated by a family of penalized mean-field BSDEs. 
	\end{abstract}

    \textbf{Key words}: backward stochastic differential equations, double mean reflections, backward Skorokhod problem, nonlinear reflections

    \textbf{MSC-classification}: 60H10
	
\section{Introduction}

In 1997, El Karoui et al. \cite{EKPPQ} first considered the reflected backward stochastic differential equations (RBSDEs) of the following form
\begin{align}\label{intro1}
\begin{cases}
Y_t=\xi+\int_t^T f(s,Y_s,Z_s)ds-\int_t^T Z_s dB_s+(K_T-K_t),\\
Y_t\geq S_t, \ t\in[0,T],\\
K_0=0, \ K \textrm{ is a nondecreasing process such that } \int_0^T (Y_s-S_s)dK_s=0.
\end{cases}
\end{align}
Compared with the classical BSDE, the first component of the solution is required to be above a given process $S$. To this end, an additional nondecreasing process $K$ should be added in this equation to push the solution upwards. It is natural to assume that such process behaves in a minimal way, i.e., it only increases when the solution hits the boundary $S$. Mathematically, this condition is written as the Skorokhod condition. Then, Cvitani\'{c} and Karatzas \cite{CK} investigated the BSDEs with double reflection, where the solution lies between two prescribed obstacles. It was shown that the solution to a reflected BSDE and to a doubly reflected BSDE corresponds to the value function of an optimal stopping problem and a Dynkin game, respectively. Due to its wide applications in finance, economics and partial differential equations (see, e.g., \cite{BCFE,EPQ1,EPQ2,WY}), the theory of reflected BSDEs has attracted a great deal of attention to generalize results to the case when the obstacles are not continuous or to the case when the coefficient does not satisfy Lipschitz condition. The readers may refer to the papers \cite{CM,DQS,GIOOQ,GIOQ,HLM,K1,K2,KLQT,PX} and the references therein.

It is worth pointing out that in all the above mentioned papers, the constraints depends on the path of the solution. Recently, Briand, Elie and Hu \cite{BEH} proposed the BSDE with mean reflection, where the constraint is given in terms of the distribution of the solution. More precisely, the second condition in \eqref{intro1} is written as 
\begin{align*}
\E[l(t,Y_t)]\geq 0, \ t\in[0,T],
\end{align*}
where $l$ is some given loss function. In this situation, the nondecreasing function $K$ is required to be a deterministic function satisfying the Skorokhod condition. This type of BSDEs can be applied to the superhedging of claims under running risk management constraint. Then, Falkowski and S\l omi\'{n}ski \cite{FS} considered the case of mean reflection with two constraints, that is, 
\begin{align*}
\E[l(t,Y_t)]\in [l_t,r_t], \ t\in[0,T],
\end{align*} 
and $K$ is a deterministic function of bounded variation satisfying 
\begin{align*}
\int_0^T( \E[l(s,Y_s)]-l_s )dK^+_s=0, \ \int_0^T( \E[l(s,Y_s)]-r_s )dK^-_s=0,
\end{align*}
where $K^+,K^-$ represent the positive and negative part of the Jordan decomposition of the function $K$. They also established the connection between the expectation of the solutions and the value functions of some appropriate deterministic optimization problems. The readers may refer to \cite{BH,CHM,DEH,HHLLW} for some related topic concerning mean reflected BSDEs.  

In the present paper, we consider the BSDEs with two nonlinear reflecting boundaries. That is, given two nonlinear loss functions $L,R$ with $L\leq R$, the first component $Y$ of the solution should satisfy the following conditions
\begin{align*}
\E[L(t,Y_t)]\leq 0\leq \E[R(t,Y_t)], \ t\in[0,T]
\end{align*} 
and the minimality conditions for the bounded variation function $K$ are defined accordingly. The deterministic compensator $K$ in \cite{FS} is constructed based on an appropriately defined backward Skorokhod problem. Motivated by this approach, we first study the backward Skorokhod problem with two nonlinear reflecting boundaries based on the results in \cite{Li}, which is a building block for the bounded variation function $K$ in our framework. Then, we present the existence and uniqueness result by constructing contraction mapping. The second effective method is approximation via penalization, which is frequently used for reflected BSDEs (see, e.g., \cite{CK,EKPPQ}). However, when applying this method for BSDEs with mean reflections, the family of penalized BSDEs turns into the family of penalized mean-field BSDEs (see \cite{BLP}), which makes it difficult to obtain some monotone convergence result. Nevertheless, when the reflecting boundaries are linear, i.e., considering the following constraints
\begin{align*}
\E[Y_t]\in[ l_t,r_t], \ t\in[0,T],
\end{align*}  
we are able to construct the solution as the limit of penalized BSDEs of the Mckean-Vlasov type. 

This paper is organized as follows. We first formulate the BSDE with double mean reflections in details in Section 2. In Section 3, we recall some basic results about deterministic Skorokhod problem and then extend it to the backward case. The existence and uniqueness result and some properties of BSDE with double mean reflections are studied in Section 4. In the last section, when the reflecting boundaries are in a linear fashion, we construct the solution by a penalization method.

\section{Problem formulation}

Throughout this paper, we are given a finite time horizon $T>0$ and a filtered probability space $(\Omega,\mathcal{F},\mathbb{F},\P)$ satisfying the usual conditions of right continuity and completeness. Let $B$ be a standard Brownian motion. The following notations are frequently used in this paper.

\begin{itemize}
\item $L^2(\mathcal{F}_t)$: the set of real-valued $\mathcal{F}_t$-measurable random variable $\xi$ such that $\E[|\xi|^2]<\infty$, $t\in[0,T]$;
\item $\mathcal{S}^2$: the set of real-valued adapted continuous processes $Y$ on $[0,T]$ with $\E[\sup_{t\in[0,T]}|Y_t|^2]<\infty$;
\item $\mathcal{H}^2$: the set of real-valued predictable processes $Z$ such that $\E[\int_0^T|Z_t|^2dt]<\infty$;
\item $C[0,T]$: the set of continuous functions from $[0,T]$ to $\mathbb{R}$;
\item $BV[0,T]$: the set of functions in $C[0,T]$ starting from the origin with bounded variation on $[0,T]$;
\item $I[0,T]$: the set of functions in $C[0,T]$ starting from the origin which is nondecreasing.
\end{itemize}

The main purpose of this paper is to study the BSDE with double mean reflections of the following type
\begin{equation}\label{nonlinearyz}
\begin{cases}
Y_t=\xi+\int_t^T f(s,Y_s,Z_s)ds-\int_t^T Z_s dB_s+K_T-K_t, \\
\E[L(t,Y_t)]\leq 0\leq \E[R(t,Y_t)], \\
K_t=K^R_t-K^L_t, \int_0^T \E[R(t,Y_t)]dK_t^R=\int_0^T \E[L(t,Y_t)]dK^L_t=0,
\end{cases}
\end{equation}
where $K^R,K^L\in I[0,T]$. Here, mean reflection means that the constraints do not depends on the path of the solution but on the distribution of the solution.

\begin{remark}
(i) If $L\equiv-\infty$, the BSDE with double mean reflection degenerates into the BSDE with mean reflection studied in \cite{BEH}.

\noindent (ii) Given $l,u\in C[0,T]$ with $l_t\leq u_t$, $t\in[0,T]$, let $L(t,x)=h(t,x)-u_t$ and $R(t,x)=h(t,x)-l_t$. The BSDE with double mean reflection turns into the BSDE with mean reflection and two constraints investigated in \cite{FS}.
\end{remark}  

The parameters of BSDE with double mean reflection consist of the terminal condition $\xi$, the driver (or coefficient $f$) and the loss functions $L,R$. We propose the following assumptions on these parameters.
\begin{assumption}\label{ass2}
The functions $L,R:\Omega\times [0,T]\times\mathbb{R}\rightarrow \mathbb{R}$ are measurable maps with respect to $\mathcal{F}_T\times \mathcal{B}([0,T])\times \mathcal{\mathbb{R}}$ satisfying the following conditions:
\begin{itemize}
\item[(1)] for any fixed $(\omega,x)\in \Omega\times\mathbb{R}$, $L(\omega,\cdot,x), R(\omega,\cdot,x)$ are continuous;
\item[(2)]  $\E[\sup_{t\in[0,T]}|L(t,0)|]<\infty$, $\E[\sup_{t\in[0,T]}|R(t,0)|]<\infty$;
\item[(3)] for any fixed $(\omega,t)\in \Omega\times [0,T]$, $L(\omega,t,\cdot),R(\omega,t,\cdot)$ are strictly increasing and there exists two constants $0<c<C$ such that for any $x,y\in \mathbb{R}$,
\begin{align*}
&c|x-y|\leq |L(\omega,t,x)-L(\omega,t,y)|\leq C|x-y|,\\
&c|x-y|\leq |R(\omega,t,x)-R(\omega,t,y)|\leq C|x-y|;
\end{align*}
\item[(4)] $\inf_{\omega,t,x} (R(\omega,t,x)-L(\omega,t,x))>0$.
\end{itemize}
\end{assumption}

\begin{assumption}\label{assf}
The driver $f$ is a map from $\Omega\times[0,T]\times \mathbb{R}\times\mathbb{R}$ to $\mathbb{R}$. For 
each fixed $(y,z)$, $f(\cdot,\cdot,y,z)$ is progressively measurable. There exists $\lambda>0$ such that for any $t\in[0,T]$ and any $y,y',z,z'\in\mathbb{R}$
\begin{align*}
|f(t,y,z)-f(t,y',z')|\leq \lambda(|y-y'|+|z-z'|)
\end{align*}
and 
\begin{align*}
\E[\int_0^T |f(t,0,0)|^2dt]<\infty.
\end{align*}
\end{assumption}

\begin{remark}
In \cite{CHM}, the authors considered the mean-field doubly reflected BSDE of the following type
\begin{displaymath}
\begin{cases}
Y_t=\xi+\int_t^T f(s,Y_s,\E[Y_s],Z_s)ds-\int_t^T Z_s dB_s+K^+_T-K^+_t-K^-_T+K^-_t, \\
h(t,\omega,Y_t,\E[Y_t])\leq Y_t\leq g(t,\omega,Y_t,\E[Y_t]), \\
\int_0^T (Y_s-h(s,Y_s,\E[Y_s]))dK_s^+=\int_0^T (Y_s-g(s,Y_s,\E[Y_s]))dK^-_s=0,
\end{cases}
\end{displaymath}
where $h,g$ are mappings from $[0,T]\times \Omega\times \mathbb{R}^2$ into $\mathbb{R}$ satisfying the following Lipschitz condition, i.e., there exist pairs of positive constants $(\gamma_1,\gamma_2)$, $(\beta_1,\beta_2)$ such that for any $t\in[0,T]$, $x,x',y,y'\in \mathbb{R}$,
\begin{align*}
&|h(t,x,y)-h(t,x',y')|\leq \gamma_1|x-y|+\gamma_2|x'-y'|,\\
&|g(t,x,y)-g(t,x',y')|\leq \beta_1|x-y|+\beta_2|x'-y'|.
\end{align*}
Under the following additional assumption on the Lipschitz constants, 
\begin{align*}
(\gamma_1+\gamma_2+\beta_1+\beta_2)^{1/2}[4(\gamma_1+\beta_1)+(\gamma_2+\beta_2)]^{1/2}<1,
\end{align*}
they show that the mean-field doubly reflected BSDE has a unique solution $(Y,Z,K^+,K^-)$. Compared with this result, we do not need the assumption on the Lipschitz constants to ensure the existence of solutions to  BSDEs with double mean reflections.
\end{remark}

\section{Backward Skorokhod problem}

\subsection{Skorokhod problem with two nonlinear reflecting boundaries}

In this subsection, we first recall some basic result of the Skorokhod problem with nonlinear reflecting boundaries in \cite{Li}. Let $D[0,\infty)$ be the set of real-valued right-continuous functions with left limits (or, c\`{a}dl\`{a}g functions). $I[0,\infty)$, $C[0,\infty)$, $BV[0,\infty)$ and $AC[0,\infty)$ denote the subset of $D[0,\infty)$ consisting of nondecreasing functions, continuous functions, functions of bounded variation and absolutely continuous functions, respectively.  For any $K\in BV[0,\infty)$ and $t\geq 0$, $|K|_t$ represents the total variation of $K$ on $[0,t]$. 

\begin{definition}\label{def'}
Let $s\in D[0,\infty)$, $g,h:[0,\infty)\times \mathbb{R}\rightarrow \mathbb{R}$ be two functions with $g\leq h$. A pair of functions $(x,k)\in D[0,\infty)\times BV[0,\infty)$ is called a solution of the Skorokhod problem for $s$ with nonlinear constraints $g,h$ ($(x,k)=\mathbb{SP}_g^h(S)$ for short) if 
\begin{itemize}
\item[(i)] $x_t=s_t+k_t$;
\item[(ii)] $g(t,x_t)\leq 0\leq h(t,x_t)$;
\item[(iii)] $k_{0-}=0$ and $k$ has the decomposition $k=k^h-k^g$, where $k^r,k^l$ are nondecreasing functions satisfying
\begin{align*}
\int_0^\infty I_{\{g(s,x_s)<0\}}dk^g_s=0, \  \int_0^\infty I_{\{h(s,x_s)>0\}}dk^h_s=0.
\end{align*}
\end{itemize}
\end{definition}

We propose the following assumption on the functions $g,h$.
\begin{assumption}\label{ass}
The functions $g,h:[0,\infty)\times \mathbb{R}\rightarrow \mathbb{R}$ satisfy the following conditions
\begin{itemize}
\item[(i)] For each fixed $x\in\mathbb{R}$, $g(\cdot,x),h(\cdot,x)\in D[0,\infty)$;
\item[(ii)] For any fixed $t\geq 0$, $g(t,\cdot)$, $h(t,\cdot)$ are strictly increasing;
\item[(iii)] There exists two positive constants $0<c<C<\infty$, such that for any $t\geq 0$ and $x,y\in \mathbb{R}$,
\begin{align*}
&c|x-y|\leq |g(t,x)-g(t,y)|\leq C|x-y|,\\
&c|x-y|\leq |h(t,x)-h(t,y)|\leq C|x-y|.
\end{align*} 
\item[(iv)] $\inf_{(t,x)\in[0,\infty)\times\mathbb{R}}(h(t,x)-g(t,x))>0$.
\end{itemize}
\end{assumption}

\begin{theorem}[\cite{Li}]\label{FSP}
Suppose that $g,h$ satisfy Assumption \ref{ass}. For any given $s\in D[0,\infty)$, there exists a unique pair of solution to the Skorokhod problem $\mathbb{SP}_g^h(s)$.
\end{theorem}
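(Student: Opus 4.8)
The approach I would take is to reduce the nonlinear Skorokhod problem to the classical one-dimensional \emph{two-sided} (deterministic) Skorokhod problem on a time-dependent interval. For each fixed $t\ge 0$, Assumption \ref{ass}(ii)--(iii) says that $g(t,\cdot)$ and $h(t,\cdot)$ are strictly increasing, Lipschitz, and satisfy $|g(t,x)-g(t,y)|\ge c|x-y|$, so each is a bi-Lipschitz bijection of $\mathbb{R}$; hence $b_t:=g(t,\cdot)^{-1}(0)$ and $a_t:=h(t,\cdot)^{-1}(0)$ are well defined. Condition (ii) of Definition \ref{def'} then reads simply $a_t\le x_t\le b_t$, and condition (iii) says that $k^g$ increases only on $\{t:x_t=b_t\}$ while $k^h$ increases only on $\{t:x_t=a_t\}$. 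Consequently, $(x,k)=\mathbb{SP}_g^h(s)$ if and only if $(x,k^h,k^g)$ solves the two-sided Skorokhod problem for $s$ in the moving region $[a_{\cdot},b_{\cdot}]$, with lower barrier $a$ reflected upward by $k^h$ and upper barrier $b$ reflected downward by $k^g$.

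Next I would check that $a$ and $b$ are admissible barriers. Since $g\le h$, we have $h(t,b_t)\ge g(t,b_t)=0=h(t,a_t)$, so $a_t\le b_t$; with $\delta:=\inf_{(t,x)}(h(t,x)-g(t,x))>0$ from Assumption \ref{ass}(iv), $h(t,b_t)=h(t,b_t)-g(t,b_t)\ge\delta$, and the upper Lipschitz bound gives $h(t,b_t)-h(t,a_t)\le C(b_t-a_t)$, whence $\inf_t(b_t-a_t)\ge\delta/C>0$. For the regularity in time, fix $t_0$: right-continuity of $g(\cdot,b_{t_0})$ and the lower bound give $c|b_t-b_{t_0}|\le|g(t,b_{t_0})-g(t_0,b_{t_0})|\to 0$ as $t\downarrow t_0$, and since $x\mapsto\lim_{s\uparrow t_0}g(s,x)$ is again a bi-Lipschitz bijection, its value at $0$ is the left limit $b_{t_0-}$, by the same estimate; the left limit $a_{t_0-}$ is obtained identically. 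Thus $a,b\in D[0,\infty)$ and they stay a fixed positive distance apart.

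With the problem recast in this linear form, existence and uniqueness of $(x,k^h,k^g)$ is the well-posedness of the two-sided Skorokhod problem with càdlàg barriers that are kept a positive distance apart, which is classical. A concrete route is to iterate the one-sided Skorokhod maps, each of which is given by an explicit supremum formula and is Lipschitz in the supremum norm: starting from $s$ one reflects upward off $a$, then downward off $b$, and so on. The uniform separation $\inf_t(b_t-a_t)>0$ is what makes the scheme converge, since the violation of the opposite barrier created at each step decreases; the iterates then converge uniformly to a solution whose correcting terms have locally bounded variation, and uniqueness follows from a pathwise comparison using the Skorokhod minimality conditions together with the positive gap. Finally one translates back: $k:=k^h-k^g$ yields a pair $(x,k)\in D[0,\infty)\times BV[0,\infty)$ with $k_{0-}=0$ satisfying (i)--(iii) of Definition \ref{def'}, and, by the same equivalences, every solution of $\mathbb{SP}_g^h(s)$ produces a solution of the linear problem and hence coincides with this one.

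The step I expect to be the main obstacle is the convergence of this alternating reflection scheme: one must quantify precisely how Assumption \ref{ass}(iv) controls the decay of the successive boundary overshoots — this is exactly where $\inf(h-g)>0$ is indispensable and cannot be weakened — and one must track the jumps of $s$ and of the barriers $a,b$ so that every object stays in $D[0,\infty)$ and the reflection acts instantaneously at jump times. The regularity bookkeeping for the inverse barriers, in particular the existence of the left limits $a_{t_0-}$ and $b_{t_0-}$, also relies on the uniform bi-Lipschitz bounds in Assumption \ref{ass}(iii) rather than on mere continuity. An alternative to the alternating scheme would be to construct $k$ directly via a penalization (Yosida-type) approximation, but the reduction above keeps the analysis closest to the well-understood linear theory.
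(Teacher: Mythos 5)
First, note that the paper does not actually prove Theorem \ref{FSP}: it is quoted from \cite{Li}, and the only hint of the argument given here is the remark following the statement, where the compensator is written explicitly as a min--sup--inf functional of $\phi$ and $\psi$ defined by $g(t,s_t+\phi_t)=0$, $h(t,s_t+\psi_t)=0$. Your reduction is exactly the same change of variables in disguise: your barriers satisfy $b_t=s_t+\phi_t$ and $a_t=s_t+\psi_t$, so rewriting condition (ii) of Definition \ref{def'} as $a_t\le x_t\le b_t$ and (iii) as the flat-off conditions at $a$ and $b$ turns $\mathbb{SP}_g^h(s)$ into the Skorokhod problem in the time-dependent interval $[a_\cdot,b_\cdot]$, and the formula in the remark is precisely the Burdzy--Kang--Ramanan representation of its solution transported back through this correspondence. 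Your verification that $a,b$ are well defined, c\`adl\`ag and uniformly separated (using the bi-Lipschitz bounds of Assumption \ref{ass}(iii) for the inverses and Assumption \ref{ass}(iv) for the gap $b_t-a_t\ge\delta/C$) is correct and is exactly the bookkeeping this reduction requires. So in substance you and the cited proof rest on the same linear time-dependent-interval theory; the difference is how that linear well-posedness is obtained: the route suggested by the paper is the explicit representation formula, while you propose an alternating one-sided reflection scheme.

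That last step is the one genuine soft spot. The assertion that ``the violation of the opposite barrier created at each step decreases'' does not by itself give convergence: the successive upward and downward corrections form monotone (in $n$) sequences, and without a quantitative bound they could diverge, especially since $s\in D[0,\infty)$ need not have bounded variation. To close the argument along your lines you need an a priori bound on the total variation of the iterated corrections, e.g.\ via the fact that a c\`adl\`ag path has only finitely many oscillations of size at least the gap $\delta/C$ on each compact interval, each unit of back-and-forth correction requiring $s$ plus the accumulated corrections to traverse that gap; one must also verify the flat-off conditions survive the limit and that reflection acts correctly at jump times. Alternatively, and more economically, you can simply invoke the well-posedness and explicit formula for the Skorokhod problem in a time-dependent interval from \cite{BKR} (already in the bibliography, and not even requiring the uniform gap), which makes your reduction a complete proof and reproduces the representation stated in the paper's remark; uniqueness then transfers back through the strict monotonicity of $g(t,\cdot)$ and $h(t,\cdot)$ exactly as you indicate.
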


\begin{remark}
Given $s\in D[0,\infty)$, for any $t\geq 0$, let $\phi_t$, $\psi_t$ satisfy the following equations, respectively
\begin{align*}
g(t,s_t+\phi_t)=0, \ h(t,s_t+\psi_t)=0.
\end{align*}
Then, the second component of the solution to the Skorokhod problem $\mathbb{SP}_g^h(s)$ can be represented as follows:
\begin{align*}
k_t=\min\left([-\phi^-_0]\vee \sup_{r\in[0,t]}\psi_r,\inf_{s\in[0,t]}\left[\phi_s\vee \sup_{r\in[s,t]}\psi_r\right] \right).
\end{align*}
\end{remark}

The following proposition provides the continuity property of the solution to the Skorokhod problem with respect to input function and reflecting boundary functions.
\begin{proposition}[\cite{Li}]\label{prop4.1}
Suppose that $(g^i,h^i)$ satisfy Assumption \ref{ass}, $i=1,2$. Given $s^i\in D[0,\infty)$, let $(x^i,k^i)$ be the solution to the Skorokhod problem $\mathbb{SP}_{g^i}^{h^i}(s^i)$. Then, we have
\begin{align*}
\sup_{t\in[0,T]}|k^1_t-k^2_t|\leq \frac{C}{c}\sup_{t\in[0,T]}|s^1_t-s^2_t|+\frac{1}{c}
(\bar{g}_T\vee\bar{h}_T),
\end{align*}
where
\begin{align*}
&\bar{g}_T:=\sup_{(t,x)\in[0,T]\times\mathbb{R}}|g^1(t,x)-g^2(t,x)|,\\
&\bar{h}_T:= \sup_{(t,x)\in[0,T]\times\mathbb{R}}|h^1(t,x)-h^2(t,x)|.
\end{align*}
\end{proposition}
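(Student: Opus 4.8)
The plan is to read off the estimate from the explicit formula for the reflecting term recorded in the Remark following Theorem \ref{FSP}, reducing the whole statement to a pointwise bound on two auxiliary functions plus the observation that $\min$, $\max$, $\sup$, $\inf$ and the map $x\mapsto -x^-$ are all $1$-Lipschitz. For $i=1,2$ and $t\in[0,T]$ I would introduce $\phi^i_t,\psi^i_t\in\mathbb{R}$, the unique solutions of $g^i(t,s^i_t+\phi^i_t)=0$ and $h^i(t,s^i_t+\psi^i_t)=0$ (well defined since $g^i(t,\cdot),h^i(t,\cdot)$ are continuous, strictly increasing and bi-Lipschitz by Assumption \ref{ass}(ii)--(iii)), so that by that Remark
\begin{align*}
k^i_t=\min\left(\bigl[-(\phi^i_0)^-\bigr]\vee\sup_{r\in[0,t]}\psi^i_r,\ \inf_{u\in[0,t]}\Bigl[\phi^i_u\vee\sup_{r\in[u,t]}\psi^i_r\Bigr]\right),\qquad t\in[0,T].
\end{align*}
All the genuine content of the proposition is already packaged in this formula, which we borrow from \cite{Li}; what remains is a stability computation.

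The first step is to control $\phi^1-\phi^2$ and $\psi^1-\psi^2$ uniformly on $[0,T]$. Fixing $t$ and inserting the mixed term $g^1(t,s^2_t+\phi^2_t)$, the identities $g^1(t,s^1_t+\phi^1_t)=0=g^2(t,s^2_t+\phi^2_t)$ give $|g^1(t,s^1_t+\phi^1_t)-g^1(t,s^2_t+\phi^2_t)|=|g^1(t,s^2_t+\phi^2_t)-g^2(t,s^2_t+\phi^2_t)|\le\bar g_T$, while the lower bi-Lipschitz bound on $g^1$ bounds the same quantity below by $c\,|(s^1_t-s^2_t)+(\phi^1_t-\phi^2_t)|$. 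The triangle inequality then yields $|\phi^1_t-\phi^2_t|\le|s^1_t-s^2_t|+\tfrac1c\bar g_T$, and the same argument with $h^i$ in place of $g^i$ gives $|\psi^1_t-\psi^2_t|\le|s^1_t-s^2_t|+\tfrac1c\bar h_T$. Writing $\Delta:=\sup_{t\in[0,T]}|s^1_t-s^2_t|+\tfrac1c(\bar g_T\vee\bar h_T)$ and taking suprema over $t\in[0,T]$, one obtains
\begin{align*}
\sup_{t\in[0,T]}|\phi^1_t-\phi^2_t|\le\Delta,\qquad \sup_{t\in[0,T]}|\psi^1_t-\psi^2_t|\le\Delta.
\end{align*}

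The second step is to propagate $\Delta$ through the representation. Using that $x\mapsto -x^-$ is $1$-Lipschitz, that $(a,b)\mapsto a\vee b$ is $1$-Lipschitz for the $\max$-norm, and that $\sup$ and $\inf$ over a fixed index set are $1$-Lipschitz for the uniform norm, one checks, working from the inside out, that for each $t\in[0,T]$ the first argument of the outer $\min$ defining $k^1_t$ differs from that defining $k^2_t$ by at most $\sup_{r\in[0,T]}|\phi^1_r-\phi^2_r|\vee\sup_{r\in[0,T]}|\psi^1_r-\psi^2_r|\le\Delta$, and likewise for the second argument; hence $|k^1_t-k^2_t|\le\Delta$ for every $t\in[0,T]$. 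Since $C>c$ we have $\sup_{t\in[0,T]}|s^1_t-s^2_t|\le\tfrac Cc\sup_{t\in[0,T]}|s^1_t-s^2_t|$, so finally
\begin{align*}
\sup_{t\in[0,T]}|k^1_t-k^2_t|\le\Delta\le\frac Cc\sup_{t\in[0,T]}|s^1_t-s^2_t|+\frac1c(\bar g_T\vee\bar h_T),
\end{align*}
as claimed. I do not expect a real obstacle here; the only point needing care is the bookkeeping for the nested term $\inf_{u\in[0,t]}[\phi^i_u\vee\sup_{r\in[u,t]}\psi^i_r]$, where the inner $\sup$ ranges over the moving window $[u,t]\subseteq[0,T]$ and must be estimated before applying the outer $\inf$ — but even there the $1$-Lipschitz bounds go through unchanged.
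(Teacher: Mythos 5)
Your argument is correct, but note that the paper itself contains no proof of Proposition \ref{prop4.1}: it is quoted verbatim from \cite{Li}, so there is no in-text argument to compare against, and your derivation should be judged on its own. Your route --- reading the estimate off the explicit representation of $k$ given in the remark following Theorem \ref{FSP}, bounding $|\phi^1_t-\phi^2_t|$ and $|\psi^1_t-\psi^2_t|$ by inserting the mixed term and using the lower bi-Lipschitz bound $c|\cdot|\le|g^1(t,\cdot)-g^1(t,\cdot)|$, and then propagating the resulting uniform bound through $\min$, $\vee$, $\sup$, $\inf$ and $x\mapsto -x^-$, all of which are $1$-Lipschitz for the uniform norm --- is sound; the roots $\phi^i_t,\psi^i_t$ are indeed well defined and unique because $g^i(t,\cdot),h^i(t,\cdot)$ are continuous, strictly increasing and coercive under Assumption \ref{ass}. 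In fact your computation yields the slightly sharper estimate $\sup_{t\in[0,T]}|k^1_t-k^2_t|\le \sup_{t\in[0,T]}|s^1_t-s^2_t|+\frac{1}{c}(\bar{g}_T\vee\bar{h}_T)$, which implies the stated inequality since $C/c\ge 1$; the constant $C/c$ in \cite{Li} presumably reflects a different argument there. The one caveat is that your proof is exactly as strong as the representation formula you start from, which is itself an unproved quotation from \cite{Li} in this paper; so as a self-contained verification of the cited result it is conditional on that formula, but as a derivation of the stability estimate from it, it is complete and correct.
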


\begin{remark}
If $s\in C[0,\infty)$ and (i) of Assumption \ref{ass} is replaced by $g(\cdot,x),h(\cdot,x)\in C[0,\infty)$ for any $x\in\mathbb{R}$, then each component of solution to the Skorokhod problem $\mathbb{SP}_g^h(s)$ is continuous.
\end{remark}

\subsection{Backward Skorokhod problem with two nonlinear reflecting boundaries}

In this subsection, we consider the backward Skorokhod problem, which is the building block for BSDEs with double mean reflections \eqref{nonlinearyz}.
\begin{definition}\label{def}
Let $s\in C[0,T]$, $a\in \mathbb{R}$ and $l,r:[0,T]\times \mathbb{R}\rightarrow \mathbb{R}$ be two functions such that $l\leq r$ and $l(T,a)\leq 0\leq r(T,a)$. A pair of functions $(x,k)\in C[0,T]\times BV[0,T]$ is called a solution of the backward Skorokhod problem for $s$ with nonlinear constraints $l,r$ ($(x,k)=\mathbb{BSP}_l^r(s,a)$ for short) if 
\begin{itemize}
\item[(i)] $x_t=a+s_T-s_t+k_T-k_t$;
\item[(ii)] $l(t,x_t)\leq 0\leq r(t,x_t)$, $t\in[0,T]$;
\item[(iii)]  $k$ has the decomposition $k=k^r-k^l$, where $k^r,k^l\in I[0,T]$ satisfy 
\begin{align}\label{iii}
\int_0^T I_{\{l(s,x_s)<0\}}dk^l_s=0, \  \int_0^T I_{\{r(s,x_s)>0\}}dk^r_s=0.
\end{align}
\end{itemize}
\end{definition}

We first introduce the assumptions made for the reflecting boundary functions. Compared with the assumption made for the forward Skorokhod problem (see Assumption \ref{ass}), these two functions are defined on finite time interval and they are assumed to be continuous.
\begin{assumption}\label{ass1}
The functions $l,r:[0,T]\times \mathbb{R}\rightarrow \mathbb{R}$ satisfy the following conditions
\begin{itemize}
\item[(i)] For each fixed $x\in\mathbb{R}$, $l(\cdot,x),r(\cdot,x)\in C[0,T]$;
\item[(ii)] For any fixed $t\in[0,T]$, $l(t,\cdot)$, $r(t,\cdot)$ are strictly increasing;
\item[(iii)] There exists two positive constants $0<c<C<\infty$, such that for any $t\in [0,T]$ and $x,y\in \mathbb{R}$,
\begin{align*}
&c|x-y|\leq |l(t,x)-l(t,y)|\leq C|x-y|,\\
&c|x-y|\leq |r(t,x)-r(t,y)|\leq C|x-y|.
\end{align*} 
\item[(iv)] $\inf_{(t,x)\in[0,T]\times\mathbb{R}}(r(t,x)-l(t,x))>0$.
\end{itemize}
\end{assumption}

\begin{theorem}\label{BSP}
Let Assumption \ref{ass1} hold. For any given $s\in C[0,T]$ and $a\in \mathbb{R}$ with $l(T,a)\leq 0\leq r(T,a)$, there exists a unique solution to the backward Skorokhod problem $(x,k)=\mathbb{BSP}_l^r(s,a)$.
\end{theorem}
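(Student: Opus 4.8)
The plan is to reduce the backward Skorokhod problem on $[0,T]$ to the forward Skorokhod problem of Theorem \ref{FSP} by a time reversal, and then transport existence and uniqueness back. Concretely, I would set $\tau=T-t$ and define the time-reversed input $\tilde s_\tau:=s_T-s_{T-\tau}$ for $\tau\in[0,T]$, which is continuous and satisfies $\tilde s_0=0$; I would extend it to $[0,\infty)$ (for instance by freezing it at $\tilde s_T$ for $\tau\ge T$, or by any continuous extension) so that it lies in $D[0,\infty)$. Similarly I would define reversed boundary functions $\tilde l(\tau,x):=l(T-\tau,x)$ and $\tilde r(\tau,x):=r(T-\tau,x)$ for $\tau\in[0,T]$, extended constantly in $\tau$ beyond $T$. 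Assumption \ref{ass1} on $(l,r)$ translates directly into Assumption \ref{ass} for $(\tilde l,\tilde r)$: continuity in $\tau$ becomes the $D[0,\infty)$ requirement, strict monotonicity and the two-sided Lipschitz bounds in $x$ are preserved verbatim, and the uniform gap condition (iv) is unchanged.

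Next I would apply Theorem \ref{FSP} to obtain the unique solution $(\tilde x,\tilde k)=\mathbb{SP}_{\tilde l}^{\tilde r}(\tilde s)$ on $[0,\infty)$, but I need the initial value to match the prescribed $a$. Here the hypothesis $l(T,a)\le 0\le r(T,a)$, i.e. $\tilde l(0,a)\le 0\le \tilde r(0,a)$, is exactly what guarantees that the constraint is already feasible at $\tau=0$ with $\tilde x_0=\tilde s_0+\tilde k_{0-}+(\text{jump})=a$; more precisely, I would shift the input, running the forward problem for $\bar s_\tau:=a+\tilde s_\tau$ instead, so that condition (i) of Definition \ref{def'} reads $\bar x_\tau=a+\tilde s_\tau+\bar k_\tau$ and at $\tau=0$ gives $\bar x_0=a+\bar k_0$; feasibility of $a$ forces $\bar k_0=0$ (the regulator does not need to act at time $0$), hence $\bar x_0=a$. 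Then I would define the candidate backward solution by undoing the time change: $x_t:=\bar x_{T-t}$ and $k_t:=\bar k_T-\bar k_{T-t}$ for $t\in[0,T]$, with $k^r_t:=\bar k^{\tilde r}_T-\bar k^{\tilde r}_{T-t}$ and $k^l_t:=\bar k^{\tilde l}_T-\bar k^{\tilde l}_{T-t}$ as the Jordan-type pieces, which lie in $I[0,T]$ since $\bar k^{\tilde r},\bar k^{\tilde l}$ are nondecreasing.

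It then remains to verify that $(x,k)$ so defined is a solution of $\mathbb{BSP}_l^r(s,a)$ in the sense of Definition \ref{def}. Condition (i) follows by a direct substitution: $\bar x_{T-t}=a+\tilde s_{T-t}+\bar k_{T-t}$ and writing $\tilde s_{T-t}=s_T-s_t$ together with $k_T-k_t=-(\bar k_{T-t}-\bar k_T)$ gives $x_t=a+s_T-s_t+k_T-k_t$. Condition (ii) is immediate because $l(t,x_t)=\tilde l(T-t,\bar x_{T-t})$ and $\bar x$ satisfies the forward constraint. For the minimality condition (iii), the key is that the Stieltjes measures $dk^r_t$ and $dk^l_t$ on $[0,T]$ are the pushforwards under $t\mapsto T-t$ of $d\bar k^{\tilde r}_\tau$ and $d\bar k^{\tilde l}_\tau$, so the flat-off conditions $\int_0^\infty I_{\{\tilde r(\tau,\bar x_\tau)>0\}}d\bar k^{\tilde r}_\tau=0$ and the analogous one for $\tilde l$ transfer to $\int_0^T I_{\{r(t,x_t)>0\}}dk^r_t=0$ and $\int_0^T I_{\{l(t,x_t)<0\}}dk^l_t=0$. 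Uniqueness of the backward solution then follows from uniqueness in Theorem \ref{FSP}: any backward solution reverses to a forward solution of $\mathbb{SP}_{\tilde l}^{\tilde r}(\bar s)$ with regulator vanishing at $\tau=0$, which is unique. The one point requiring care — and the only real obstacle — is the bookkeeping at the endpoint $\tau=0$ (equivalently $t=T$): I must make sure the convention $k_{0-}=0$ in the forward problem, combined with feasibility of $a$, genuinely yields $\bar k_0=0$ and hence no spurious jump of $x$ at $t=T$; this is where the assumption $l(T,a)\le 0\le r(T,a)$ is used, and it is exactly parallel to the role of the admissibility condition in \cite{FS}.
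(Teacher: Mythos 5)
Your proposal is correct and follows essentially the same route as the paper: reverse time via $\tau=T-t$, feed the shifted input $\bar s_\tau=a+s_T-s_{T-\tau}$ and reversed boundaries $l(T-\tau,\cdot),r(T-\tau,\cdot)$ into the forward Skorokhod problem of Theorem \ref{FSP}, use $l(T,a)\le 0\le r(T,a)$ to get $\bar k_0=0$, and transport the solution and its flat-off conditions back by $x_t=\bar x_{T-t}$, $k_t=\bar k_T-\bar k_{T-t}$. The only cosmetic differences are your explicit extension of the data to $[0,\infty)$ and your spelled-out uniqueness transfer, both of which are implicit in the paper's argument.
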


\begin{proof}
For any $t\in[0,T]$, set
\begin{align*}
\bar{s}_t:=a+s_T-s_{T-t}, \ \bar{l}(t,x):=l(T-t,x), \ \bar{r}(t,x):=r(T-t,x).
\end{align*}
We may check that $\bar{s}\in C[0,T]$ and $\bar{l},\bar{r}$ satisfy Assumption \ref{ass}. Besides,
\begin{align*}
\bar{l}(0,\bar{s}_0)=l(T,a)\leq  0\leq r(T,a)= \bar{r}(0,\bar{s}_0).
\end{align*}
By Theorem \ref{FSP}, there exists a unique solution to the Skorokhod problem $(\bar{x},\bar{k})=\mathbb{SP}_{\bar{l}}^{\bar{r}}(\bar{s})$. Clearly, we have $\bar{k}_0=0$. Let $\bar{k}^l$, $\bar{k}^r$ be two nondecreasing functions such that $\bar{k}=\bar{k}^r-\bar{k}^l$ and 
\begin{align*}
\int_0^T I_{\{\bar{l}(s,\bar{x}_s)<0\}}d\bar{k}^l_s=0, \  \int_0^T I_{\{\bar{r}(s,\bar{x}_s)>0\}}d\bar{k}^r_s=0.
\end{align*}
 For any $t\in[0,T]$, set 
\begin{align*}
x_t:=\bar{x}_{T-t}, \ k_t:=\bar{k}_T-\bar{k}_{T-t}.
\end{align*}
We claim that $(x,k)$ is the solution to the backward Skorokhod problem $\mathbb{BSP}_l^r(s,a)$. Indeed, for any $t\in[0,T]$, we have
\begin{align*}
&x_t=\bar{x}_{T-t}=\bar{s}_{T-t}+\bar{k}_{T-t}=a+s_T-s_t+k_T-k_t,\\
&l(t,x_t)=\bar{l}(T-t,\bar{x}_{T-t})\leq 0\leq \bar{r}(T-t,\bar{x}_{T-t})=r(t,x_t),\\
&\int_0^T I_{\{l(t,x_t)<0\}}dk^l_t=-\int_0^T I_{\{\bar{l}(T-t,\bar{x}_{T-t})<0\}}d\bar{k}^l_{T-t}=\int_0^T I_{\{\bar{l}(t,\bar{x}_t)<0\}}d\bar{k}^l_t=0,\\
&\int_0^T I_{\{r(t,x_t)>0\}}dk^r_t=-\int_0^T I_{\{\bar{r}(T-t,\bar{x}_{T-t})>0\}}d\bar{k}^r_{T-t}=\int_0^T I_{\{\bar{r}(t,\bar{x}_t)>0\}}d\bar{k}^r_t=0,
\end{align*}
where $k^l_t=\bar{k}^l_T-\bar{k}^l_{T-t}$, $k^r_t=\bar{k}^r_T-\bar{k}^r_{T-t}$. The proof is complete.
\end{proof}

Given $a^i\in\mathbb{R}$, $s^i\in C[0,T]$, $l^i,r^i$ satisfying Assumption \ref{ass1} and $l^i(T,a^i)\leq 0\leq r^i(T,a^i)$, $i=1,2$, 
 let $(x^i,k^i)$, $(\bar{x}^i,\bar{k}^i)$ be the solution to the backward Skorokhod problem $\mathbb{BSP}_{l^i}^{r^i}(s^i,a^i)$ and the solution to the forward Skorokhod problem $\mathbb{SP}_{\bar{l}^i}^{\bar{r}^i}(\bar{s}^i)$, $i=1,2$, where $\bar{l}^i(t,x)=l^i(T-t,x)$, $\bar{r}^i(t,x)=r^i(T-t,x)$, $\bar{s}^i_t=a^i+s^i_T-s^i_{T-t}$.  By the proof of Theorem \ref{BSP}, we have 
$$
x^i_t=\bar{x}^i_{T-t}, \ k^i_T-k^i_t=\bar{k}^i_{T-t}.
$$
Besides, by Proposition  \ref{prop4.1}, we have
\begin{align*}
\sup_{t\in[0,T]}|\bar{k}^1_t-\bar{k}^2_t|\leq &\frac{C}{c}|\bar{s}^1_t-\bar{s}^2_t|+\frac{1}{c}(\bar{L}_T\vee\bar{R}_T)\\
\leq & \frac{C}{c}|a^1-a^2|+2\frac{C}{c}\sup_{t\in[0,T]}|s^1_t-s^2_t|+\frac{1}{c}(\bar{L}_T\vee\bar{R}_T),
\end{align*}
where 
\begin{align*}
&\bar{L}_T=\sup_{(t,x)\in[0,T]\times \mathbb{R}}|\bar{l}^1(t,x)-\bar{l}^2(t,x)|=\sup_{(t,x)\in[0,T]\times \mathbb{R}}|{l}^1(t,x)-{l}^2(t,x)|,\\
&\bar{R}_T=\sup_{(t,x)\in[0,T]\times \mathbb{R}}|\bar{r}^1(t,x)-\bar{r}^2(t,x)|=\sup_{(t,x)\in[0,T]\times \mathbb{R}}|{r}^1(t,x)-{r}^2(t,x)|.
\end{align*}
Then, it is easy to check that 
\begin{equation}\label{diffK}
\begin{split}
\sup_{t\in[0,T]}|k^1_t-k^2_t|\leq &|\bar{k}^1_T-\bar{k}^2_T|+\sup_{t\in[0,T]}|\bar{k}^1_t-\bar{k}^2_t|\\
\leq & 2\frac{C}{c}|a^1-a^2|+4\frac{C}{c}\sup_{t\in[0,T]}|s^1_t-s^2_t|+\frac{2}{c}(\bar{L}_T\vee\bar{R}_T)
\end{split}
\end{equation}

\begin{remark}
Theorem \ref{BSP} is the extension of Lemma 2.5 in \cite{FS} to the case where the reflecting boundaries are nonlinear. Besides, suppose $\alpha^i,\beta^i\in C[0,T]$ satisfy that $\alpha^i\leq \beta^i$ and $\alpha^i_T\leq a^i\leq \beta^i_T$, $i=1,2$. Set $l^i(t,x)=x-\beta^i_t$, $r^i(t,x)=x-\alpha^i_t$, $i=1,2$. Let $(x^i,k^i)$ be the solution to the backward Skorokhod problem $\mathbb{BSP}_{l^i}^{r^i}(s^i,a^i)$, $i=1,2$. Then, estimate \eqref{diffK} coincides with Eq. (2.7) in \cite{FS}.
\end{remark}

\begin{remark}
Note that under the assumption as in Theorem \ref{BSP}, each component of the solution to the backward Skorokhod problem $\mathbb{BSP}_l^r(s,a)$ is continuous. Therefore, the functions $\{l(t,x_t)\}_{t\in[0,T]}$ and $\{r(t,x_t)\}_{t\in[0,T]}$ are continuous. Then, condition \eqref{iii} in Definition \ref{def} can be written as
\begin{align*}
\int_0^T l(s,x_s)dk^l_s=0, \ \int_0^T r(s,x_s)dk^r_s=0.
\end{align*}
\end{remark}

\section{BSDE with double mean reflections}

\subsection{Existence and uniqueness result}

In this subsection, we study the well-posedness of BSDEs with double mean reflections of the form \eqref{nonlinearyz}. Similar to the case with one reflecting boundary introduced in \cite{BEH}, we first construct a solution for the constant coefficient case and then deal with the general case by a Picard contraction argument. For this purpose, for any fixed $S\in \mathcal{S}^2$, we define the following two maps $r,l:[0,T]\times\mathbb{R}\rightarrow\mathbb{R}$ as
\begin{equation}\label{operator}
\begin{split}
r^S(t,x):=\E[R(t,x+S_t-\E[S_t])],\  l^S(t,x):=\E[L(t,x+S_t-\E[S_t])]. 
\end{split}
\end{equation}
For simplicity, we always omit the superscript $S$. By Assumption \ref{ass2}, $r$ and $l$ are well-defined. Besides, we show that $r,l$ satisfy Assumption \ref{ass1}, which ensures that they can be regarded as reflecting boundary functions for some backward Skorokhod problem.

\begin{lemma}\label{proofofass1}
Under Assumption \ref{ass2}, for any $S\in \mathcal{S}^2$, $l,r$ defined by \eqref{operator} satisfy Assumption \ref{ass1}.
\end{lemma}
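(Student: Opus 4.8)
The plan is to verify the four conditions of Assumption \ref{ass1} for the functions $l,r$ defined in \eqref{operator}, working one condition at a time and reducing each to the corresponding property of $L,R$ in Assumption \ref{ass2}. Throughout I fix $S\in\mathcal{S}^2$ and write $X_t:=S_t-\E[S_t]$, so that $r(t,x)=\E[R(t,x+X_t)]$ and $l(t,x)=\E[L(t,x+X_t)]$; note $\E[|X_t|^2]<\infty$ and in fact $\E[\sup_t|X_t|^2]<\infty$.

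First I would check (ii) and (iii), which are the easiest. For (iii): for fixed $t$ and any $x,y\in\mathbb{R}$, condition (3) of Assumption \ref{ass2} gives $c|x-y|\le|R(\omega,t,x+X_t(\omega))-R(\omega,t,y+X_t(\omega))|\le C|x-y|$ pointwise in $\omega$; since $R(\omega,t,\cdot)$ is strictly increasing, the middle quantity equals $|{\E}[R(t,x+X_t)-R(t,y+X_t)]|$ after taking expectations (the sign of $R(t,x+X_t)-R(t,y+X_t)$ is constant in $\omega$, namely $\mathrm{sgn}(x-y)$), so $c|x-y|\le|r(t,x)-r(t,y)|\le C|x-y|$, and likewise for $l$. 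The same sign observation immediately gives (ii), strict monotonicity of $r(t,\cdot)$ and $l(t,\cdot)$. For (iv): condition (4) of Assumption \ref{ass2} says $R(\omega,t,z)-L(\omega,t,z)\ge\delta$ for some $\delta>0$ uniformly in $(\omega,t,z)$, so $r(t,x)-l(t,x)=\E[R(t,x+X_t)-L(t,x+X_t)]\ge\delta$, giving (iv).

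The main work is condition (i): for each fixed $x$, $t\mapsto r(t,x)$ and $t\mapsto l(t,x)$ are continuous on $[0,T]$. This is where I expect the only real obstacle, and it is a dominated-convergence argument. Fix $x$ and let $t_n\to t$. By condition (1) of Assumption \ref{ass2}, $R(\omega,t_n,x+X_{t_n}(\omega))\to R(\omega,t,x+X_t(\omega))$ for a.e.\ $\omega$ — here I need continuity of $R(\omega,\cdot,\cdot)$ jointly, but continuity in the time variable (condition (1)) together with the Lipschitz property (condition (3)) in the space variable suffices, combined with path-continuity of $S$, hence of $X$, to pass $X_{t_n}\to X_t$ through. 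For the domination: using condition (3), $|R(\omega,s,x+X_s(\omega))|\le|R(\omega,s,0)|+C(|x|+|X_s(\omega)|)\le\sup_s|R(\omega,s,0)|+C|x|+C\sup_s|X_s(\omega)|$, and the right-hand side is integrable by condition (2) of Assumption \ref{ass2} and $\E[\sup_s|X_s|^2]<\infty$. Dominated convergence then gives $r(t_n,x)\to r(t,x)$; the argument for $l$ is identical.

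I would close by remarking that conditions (i)--(iv) of Assumption \ref{ass1} have all been verified, so $l,r$ are admissible reflecting boundary functions for a backward Skorokhod problem, which is exactly what is needed to apply Theorem \ref{BSP} in the construction of the compensator $K$. The subtle point worth stating carefully is the sign argument used in (ii) and (iii): because $R(\omega,t,\cdot)$ is strictly increasing (uniformly, with the same orientation in $\omega$), the random variable $R(t,x+X_t)-R(t,y+X_t)$ has a deterministic sign, so the absolute value commutes with the expectation; without this one would only get the upper Lipschitz bound on $r$ and $l$ but not the lower one, and the lower bound $c|x-y|\le|r(t,x)-r(t,y)|$ is essential for the backward Skorokhod theory.
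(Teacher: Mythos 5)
Your proposal is correct and follows essentially the same route as the paper: pointwise application of the Lipschitz/monotonicity bounds of Assumption \ref{ass2}(3) under the expectation for (ii)--(iii), the uniform separation of $R-L$ for (iv), and a dominated convergence argument with the bound $|R(t,x+S_t-\E[S_t])|\le \sup_t|R(t,0)|+C|x|+C|S_t|+C\E[|S_t|]$ for the time-continuity in (i). Your explicit remarks on the deterministic sign of $R(t,x+X_t)-R(t,y+X_t)$ and on combining time-continuity with the spatial Lipschitz bound to get a.e.\ convergence merely spell out steps the paper leaves implicit.
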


\begin{proof}
We first show that $r$ satisfies Assumption \ref{ass1} (i)-(iii). In fact, it is easy to check that for any fixed $t\in[0,T]$, $r(t,\cdot)$ is strictly increasing. For any $x,y\in \mathbb{R}$ with $x<y$, we have
\begin{align*}
&r(t,y)-r(t,x)=\E[R(t,y+S_t-\E[S_t])-R(t,x+S_t-\E[S_t])]\leq C(y-x),\\
&r(t,y)-r(t,x)=\E[R(t,y+S_t-\E[S_t])-R(t,x+S_t-\E[S_t])]\geq c(y-x).
\end{align*}
Hence, $r$ satisfies Assumption \ref{ass1} (iii). Observe that 
\begin{align*}
|R(t,x+S_t-\E[S_t])|\leq \sup_{t\in[0,T]}(|R(t,0)|+C|S_t|+C\E[|S_t|])+C|x|.
\end{align*}
By the dominated convergence theorem, $r$ satisfies Assumption \ref{ass1} (i). Similarly, we can show that $l$ satisfies Assumption \ref{ass1} (i)-(iii). It remains to prove Assumption \ref{ass1} (iv) holds. Note that
\begin{align*}
r(t,x)-l(t,x)=\E[R(t,x+S_t-\E[S_t])-L(t,x+S_t-\E[S_t])]\geq \inf_{\omega,t,x}(R(\omega,t,x)-L(\omega,t,x))>0.
\end{align*}
The proof is complete.
\end{proof}

Now, we first focus on the BSDE with double mean reflections whose coefficient does not depend on $(Y,Z)$. 
\begin{proposition}\label{prop7}
Given $C\in \mathcal{H}^2$, the BSDE with double mean reflections
\begin{equation}\label{nonlinearnoyz}
\begin{cases}
Y_t=\xi+\int_t^T C_sds-\int_t^T Z_s dB_s+K_T-K_t, \\
\E[L(t,Y_t)]\leq 0\leq \E[R(t,Y_t)], \\
K_t=K^R_t-K^L_t, \int_0^T \E[R(t,Y_t)]dK_t^R=\int_0^T \E[L(t,Y_t)]dK^L_t=0,
\end{cases}
\end{equation}
has a unique solution $(Y,Z,K)\in \mathcal{S}^2\times \mathcal{H}^2\times BV[0,T]$.
\end{proposition}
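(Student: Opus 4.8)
The plan is to reduce the problem to a backward Skorokhod problem driven by an explicit continuous input, using the fact that the coefficient $C$ does not involve $(Y,Z)$. First I would write down the "candidate" martingale part: since $\xi\in L^2(\mathcal{F}_T)$ and $C\in\mathcal{H}^2$, the process $\eta_t:=\E\big[\xi+\int_0^T C_s\,ds\,\big|\,\mathcal{F}_t\big]$ is a square-integrable martingale, so by the martingale representation theorem $\eta_t=\eta_0+\int_0^t Z_s\,dB_s$ for some $Z\in\mathcal{H}^2$. This $Z$ will be the second component of the solution; note it is forced, since any solution must have $Y_t+\int_0^t C_s\,ds-\int_0^t Z_s\,dB_s+K_t$ constant in $t$ (equal to $Y_0+K_0$), and $K$ is deterministic, so $Z$ is determined by $\xi$ and $C$ alone. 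Define the continuous (deterministic-drift, stochastic) process $U_t:=\xi+\int_t^T C_s\,ds-\int_t^T Z_s\,dB_s$; then any solution satisfies $Y_t=U_t+K_T-K_t$, and in particular $Y_t-\E[Y_t]=U_t-\E[U_t]$ does not depend on $K$.

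The key observation is that mean reflection only constrains $\E[Y_t]=\E[U_t]+K_T-K_t$, which is a deterministic quantity. Set $y_t:=\E[Y_t]$, $u_t:=\E[U_t]$ (a continuous function on $[0,T]$ with $u_T=\E[\xi]$), and $S_t:=U_t$, so $S\in\mathcal{S}^2$ with $S_t-\E[S_t]=Y_t-y_t$. With $l,r$ defined from this $S$ via \eqref{operator}, the constraint $\E[L(t,Y_t)]\le 0\le\E[R(t,Y_t)]$ becomes exactly $l(t,y_t)\le 0\le r(t,y_t)$, and the Skorokhod-type minimality conditions on $K^R,K^L$ become condition (iii) of Definition \ref{def} for $(y,k)$ with $k=K=K^R-K^L$. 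Thus $(y,K)$ must be the solution of the backward Skorokhod problem $\mathbb{BSP}_l^r(s,a)$ with input $s_t:=\E[\int_0^t C_r\,dr]=\int_0^t\E[C_r]\,dr$ (so that $a+s_T-s_t=\E[\xi]+\int_t^T\E[C_r]\,dr=\E[U_t]=u_t$) and terminal value $a:=\E[\xi]$. By Lemma \ref{proofofass1}, $l,r$ satisfy Assumption \ref{ass1}; one must also check the compatibility condition $l(T,a)\le 0\le r(T,a)$, i.e. $\E[L(T,\xi)]\le 0\le\E[R(T,\xi)]$ — this is where an extra standing hypothesis on $\xi$ is needed (the analogue of the admissibility assumption in \cite{BEH,FS}), which I would assume is in force. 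Theorem \ref{BSP} then gives a unique such $(y,K)$, with $y,K$ continuous; define $Y_t:=U_t+K_T-K_t$.

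It remains to verify that this $(Y,Z,K)$ genuinely solves \eqref{nonlinearnoyz} and lies in the right spaces, and that it is the only solution. Membership: $Z\in\mathcal{H}^2$ by construction; $K\in BV[0,T]$ by Theorem \ref{BSP}; and $Y\in\mathcal{S}^2$ because $U\in\mathcal{S}^2$ (standard BSDE a priori estimate, using $\xi\in L^2$ and $C\in\mathcal{H}^2$) and $K$ is a bounded deterministic function. The first line of \eqref{nonlinearnoyz} holds by the definition of $Y$ and $U$. For the reflection and minimality conditions: by construction $\E[Y_t]=U_t$ expectation plus $K_T-K_t$ equals $y_t$ — wait, more precisely $\E[Y_t]=\E[U_t]+K_T-K_t=u_t+K_T-K_t=a+s_T-s_t+K_T-K_t=y_t$ by part (i) of Definition \ref{def}; and $Y_t-\E[Y_t]=U_t-\E[U_t]=S_t-\E[S_t]$, so $\E[R(t,Y_t)]=\E[R(t,\E[Y_t]+S_t-\E[S_t])]=r(t,y_t)$ and likewise $\E[L(t,Y_t)]=l(t,y_t)$; parts (ii) and (iii) of Definition \ref{def} then give the middle and last lines of \eqref{nonlinearnoyz} (using the continuity remark after Theorem \ref{BSP} to pass between the indicator and the $\int l\,dk^l$ forms). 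Uniqueness: given any solution $(Y',Z',K')$, subtracting the dynamics and using that $K'$ is deterministic forces $Z'=Z$ (match martingale parts) hence $U'=U$; then taking expectations shows $(\E[Y'_\cdot],K')$ solves the same $\mathbb{BSP}_l^r(s,a)$, so by the uniqueness in Theorem \ref{BSP} we get $K'=K$ and therefore $Y'=Y$. The main obstacle is bookkeeping: carefully checking the compatibility/admissibility condition on $\xi$ and cleanly translating the probabilistic minimality conditions into the deterministic Skorokhod conditions (and back), which hinges on the identity $Y_t-\E[Y_t]=S_t-\E[S_t]$ that makes the $S$-shifted boundary functions $l,r$ exactly right.
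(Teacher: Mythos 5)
Your proposal is correct, and for the existence half it is essentially the paper's argument: the same martingale representation of $\E_t[\xi+\int_0^T C_s\,ds]$ producing $Z$, the same process $U_t=\E_t[\xi+\int_t^T C_s\,ds]$ (the paper's $\widetilde{Y}$, up to the identity $\widetilde Y_t=\E_t[\xi+\int_t^T C_s\,ds]$), the same boundaries $l,r$ from \eqref{operator}, the same input $s_t=\E[\int_0^t C_s\,ds]$ and terminal value $a=\E[\xi]$, and then Theorem \ref{BSP} plus Lemma \ref{proofofass1}. Where you genuinely diverge is the uniqueness half: the paper argues directly at the BSDE level, setting $X_t=\E_t[\xi+\int_t^T C_s\,ds]$, supposing $K^1_T-K^1_{t_1}>K^2_T-K^2_{t_1}$ at some $t_1$, and deriving a contradiction on $[t_1,t_2]$ from the strict monotonicity of $L,R$ together with both flat-off conditions; you instead observe that for any solution the deterministic $K'$ forces $Z'=Z$, hence $(\E[Y'_\cdot],K')$ solves the very same $\mathbb{BSP}_l^r(s,a)$, and invoke the uniqueness in Theorem \ref{BSP}. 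Your route is shorter and reuses the Skorokhod machinery (note only that Definition \ref{def} asks merely for the existence of some decomposition $k=k^r-k^l$, so a BSDE solution's $(K'^R,K'^L)$ qualifies), while the paper's argument is self-contained and in effect re-derives the backward Skorokhod uniqueness in this special case. You were also right to flag the compatibility condition $\E[L(T,\xi)]\le 0\le \E[R(T,\xi)]$: it is not written in the proposition's statement but is used in the paper's proof (it is part of the standing hypotheses of Theorem \ref{main}, where the proposition is applied).
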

\begin{proof}
We first prove the uniqueness. 
Let $(Y^i,Z^i,K^i)$ be two solutions to \eqref{nonlinearnoyz}, $i=1,2$. Set $X_t=\E_t[\xi+\int_t^T C_sds]$. Consequently, we have $Y_t^i=X_t+K^i_T-K^i_t$. Suppose that there exists $t_1<T$ such that
\begin{displaymath}
K^1_T-K^1_{t_1}>K^2_T-K^2_{t_1}.
\end{displaymath}
Set
\begin{displaymath}
t_2=\inf\{t\geq t_1: K_T^1-K^1_t=K_T^2-K^2_t\}.
\end{displaymath}
It is easy to check that
\begin{displaymath}
K_T^1-K^1_t> K^2_T-K^2_t, \ t_1\leq t<t_2.
\end{displaymath}
Noting that $R$ and $L$ are strictly increasing, we have for any $t_1\leq t<t_2$
\begin{align*}
&\E[R(t,X_t+K_T^1-K_t^1)]>\E[R(t,X_t+K_T^2-K_t^2)]\geq 0, \\
&\E[L(t,X_t+K_T^2-K_t^2)]<\E[L(t,X_t+K_T^1-K_t^1)]\leq 0.
\end{align*}
The flat-off condition
\begin{displaymath}
\int_{t_1}^{t_2}\E[R(t,X_t+K_T^1-K_t^1)]dK^{1,R}_t=\int_{t_1}^{t_2}\E[L(t,X_t+K_T^2-K_t^2)]dK^{2,L}_t=0
\end{displaymath}
implies that $dK^{1,R}_t=dK^{2,L}_t=0$ on the interval $[t_1,t_2]$. We obtain that
\begin{align*}
K_T^1-K_{t_2}^1=&K^1_T-(K^{1,R}_{t_2}-K^{1,L}_{t_2})=K^{1}_T-K^{1,R}_{t_1}+K^{1,L}_{t_2}\geq K^1_T-K^{1,R}_{t_1}+K^{1,L}_{t_1}\\
=&K_T^1-K^1_{t_1}>K^2_T-K^2_{t_1}=K_T^2-(K^{2,R}_{t_1}-K^{2,L}_{t_1})\\
=&K^2_T+K^{2,L}_{t_2}-K^{2,R}_{t_1}\geq K^2_T+K^{2,L}_{t_2}-K^{2,R}_{t_2}= K^2_T-K^2_{t_2},
\end{align*}
which contradicts the definition of $t_2$. Therefore, we have $K^1=K^2$. Note that $(Y^i+K^i,Z^i)$ may be seen as the solution to BSDE with terminal value $\xi+K^i_T$ and generator $g\equiv C$. By the uniqueness of solutions to BSDEs, we have $Y^1=Y^2$ and $Z^1=Z^2$.

Now we prove the existence.  For any $t\in[0,T]$, set 
\begin{align*}
&\widetilde{Y}_t=\xi+\int_t^T C_s ds-(M_T-M_t),\\
&s_t=\E[\int_0^t C_sds], \ a=\E[\xi],
\end{align*}
where $M_t=\E_t[\xi+\int_0^T C_s ds]-\E[\xi+\int_0^T C_s ds]$. Clearly, we have $s\in C[0,T]$ and $M$ is a square-integrable martingale. Therefore, there exists some $Z\in \mathcal{H}^2$, such that
\begin{align*}
M_t=\int_0^t Z_s dB_s.
\end{align*}
 For any $(t,x)\in[0,T]\times \mathbb{R}$, we define
\begin{align*}
l(t,x):=\E[L(t,\widetilde{Y}_t-\E[\widetilde{Y}_t]+x)], \ r(t,x):=\E[R(t,\widetilde{Y}_t-\E[\widetilde{Y}_t]+x)].
\end{align*}
By Lemma \ref{proofofass1}, $l,r$ satisfy Assumption \ref{ass1} and 
\begin{align*}
l(T,a)=\E[L(T,\xi)]\leq 0\leq \E[R(T,\xi)]=r(T,a).
\end{align*}
Therefore, the backward Skorokhod problem $\mathbb{BSP}_l^r(s,a)$ admits a unique solution $(x,K)$. Now, we set 
\begin{align*}
Y_t=\xi+\int_t^T C_s-(M_T-M_t)+(K_T-K_t)=\xi+\int_t^T C_sds-\int_t^T Z_s dB_s+K_T-K_t.
\end{align*}
We claim that $(Y,Z,K)$ is the solution to \eqref{nonlinearnoyz}. In fact, it is easy to check that 
\begin{align*}
\E[L(t,Y_t)]=&\E[L(t,\xi+\int_t^T C_sds-(M_T-M_t)+K_T-K_t)]\\
=&\E[L(t,\widetilde{Y}_t+x_t-\E[\xi+\int_t^T C_s ds])]\\
=&\E[L(t,\widetilde{Y}_t-\E[\widetilde{Y}_t]+x_t)]=l(t,x_t).
\end{align*}
Similarly, we have $\E[R(t,Y_t)]=r(t,x_t)$. Recalling that $(x,K)=\mathbb{BSP}_l^r(s,a)$, it follows that
\begin{align*}
&\E[L(t,Y_t)]=l(t,x_t)\leq 0\leq r(t,x_t)=\E[R(t,X_t)],\\
&\int_0^T \E[L(t,Y_t)]dK^L_t=\int_0^T l(t,x_t)dK^L_t=0,\\ 
&\int_0^T \E[R(t,Y_t)]dK^L_t=\int_0^T r(t,x_t)dK^R_t=0.
\end{align*} 
The proof is complete.
\end{proof}

Now, we are in a position to state the main result of this paper, i.e., the well-posedness of BSDEs with double mean reflections. Recall that the one of the main technical results in \cite{BEH} for the single reflected case is the Lipschitz continuity for the operator $R_t$ for the $L^1$-norm (see Lemma 8 in \cite{BEH}), where $R_t$ is defined as 
\begin{align*}
R_t: L^2(\mathcal{F}_T)\rightarrow [0,\infty), \ X\mapsto \inf\{x\geq 0:\E[R(t,x+X)]\geq 0\}.
\end{align*}
In fact, the Lipschitz continuity property of $R_t$ is mainly used to derive some estimates for the compensator function $K$ (see the proof of Theorem 9 in \cite{BEH}). In our cases, we have already obtained this estimate (see Eq. \eqref{diffK}), which plays an important role to construct a contraction mapping.

\begin{theorem}\label{main}
Given $\xi\in L^2(\mathcal{F}_T)$, suppose that $L,R$ satisfy Assumption \ref{ass2} with $\E[L(T,\xi)]\leq 0\leq \E[R(T,\xi)]$ and $f$ satisfy Assumption \ref{assf}. Then, the BSDE with double mean reflection \eqref{nonlinearyz} has a unique solution $(Y,Z,K)\in \mathcal{S}^2\times \mathcal{H}^2\times BV[0,T]$.
\end{theorem}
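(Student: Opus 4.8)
The plan is to adapt the two-step argument of \cite{BEH} for the single mean-reflected case: first solve the equation when the driver does not depend on $(Y,Z)$ via Proposition \ref{prop7}, and then obtain \eqref{nonlinearyz} as the unique fixed point of a map on $\mathcal{S}^2\times\mathcal{H}^2$, the key being that the deterministic compensator is kept under control through the stability estimate \eqref{diffK}. Concretely, for $(U,V)\in\mathcal{S}^2\times\mathcal{H}^2$ I set $C_s:=f(s,U_s,V_s)$, which lies in $\mathcal{H}^2$ by Assumption \ref{assf}, and define $\Gamma(U,V):=(Y,Z)$, where $(Y,Z,K)$ is the unique solution of \eqref{nonlinearnoyz} with this $C$ furnished by Proposition \ref{prop7}. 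A triple $(Y,Z,K)$ solves \eqref{nonlinearyz} if and only if $(Y,Z)$ is a fixed point of $\Gamma$ with $K$ the associated compensator, so the theorem reduces to showing $\Gamma$ has a unique fixed point.

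To estimate $\Gamma(U^1,V^1)-\Gamma(U^2,V^2)$, write $C^i_s=f(s,U^i_s,V^i_s)$, let $(Y^i,Z^i,K^i)$ be the outputs, and follow the construction in Proposition \ref{prop7}. The anchor point $a=\E[\xi]$ is the same for both problems, so the $|a^1-a^2|$ term in \eqref{diffK} drops out; the input paths $s^i_t=\E[\int_0^tC^i_r\,dr]$ satisfy $\sup_t|s^1_t-s^2_t|\le\E\int_0^T|C^1_r-C^2_r|\,dr$; and the boundary functions $l^i,r^i$ are built from the plain BSDE values $\widetilde{Y}^i_t=\E_t[\xi+\int_t^TC^i_r\,dr]$, so that $\bar L_T\vee\bar R_T\le C\,\E[\sup_t|\Delta_t|]$ with $\Delta_t=(\widetilde{Y}^1_t-\E[\widetilde{Y}^1_t])-(\widetilde{Y}^2_t-\E[\widetilde{Y}^2_t])$; a Doob maximal inequality bounds $\E[\sup_t|\Delta_t|]$ by a constant times $\sqrt{T}\,(\E\int_0^T|C^1_r-C^2_r|^2\,dr)^{1/2}$. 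Feeding all of this into \eqref{diffK} gives $\sup_t|K^1_t-K^2_t|^2\le\kappa(T)\,\E\int_0^T|C^1_r-C^2_r|^2\,dr$ with $\kappa(T)\to0$ as $T\to0$. Since $\bar Y^i:=Y^i+K^i$ solves the plain BSDE with terminal value $\xi+K^i_T$ and driver $C^i$, standard a priori estimates control $\E[\sup_t|\bar Y^1_t-\bar Y^2_t|^2]+\E\int_0^T|Z^1_r-Z^2_r|^2\,dr$ by a $T$-dependent constant times $|K^1_T-K^2_T|^2+\E\int_0^T|C^1_r-C^2_r|^2\,dr$; combining this with the compensator bound, the inequality $|Y^1-Y^2|\le|\bar Y^1-\bar Y^2|+|K^1-K^2|$, and the Lipschitz property of $f$, one obtains $\|\Gamma(U^1,V^1)-\Gamma(U^2,V^2)\|^2\le\Lambda(T)\,\|(U^1-U^2,V^1-V^2)\|^2$, where $\Lambda$ depends only on $T,\lambda,c,C$ and $\Lambda(T)\to0$ as $T\to0$. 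Hence for $\delta>0$ small (independently of $\xi$), $\Gamma$ is a contraction on $[T-\delta,T]$ and has a unique fixed point, yielding a unique solution of \eqref{nonlinearyz} on $[T-\delta,T]$.

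One then extends the solution to $[0,T]$ by a standard backward patching: its value $Y_{T-\delta}\in L^2(\mathcal{F}_{T-\delta})$ satisfies $\E[L(T-\delta,Y_{T-\delta})]\le0\le\E[R(T-\delta,Y_{T-\delta})]$ by the reflection constraint at time $T-\delta$, hence is an admissible terminal datum for the same type of problem on $[(T-2\delta)\vee0,T-\delta]$; iterating over a partition of $[0,T]$ into intervals of length at most $\delta$ and concatenating the pieces — the compensators are deterministic $BV$ functions whose decompositions $K=K^R-K^L$ and flat-off conditions paste together across the partition points exactly as for patched reflected BSDEs — produces the global solution. Global uniqueness is obtained by the same backward induction, applying the fixed-point uniqueness block by block from the terminal time.

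The main obstacle is the passage from \eqref{diffK} to the contraction. Estimate \eqref{diffK} is a supremum-in-time bound on $K^1-K^2$ expressed through the \emph{full-horizon} $L^2$-difference of the drivers, and since that difference enters the plain-BSDE comparison through the terminal term $K^i_T$, the exponentially weighted norm that makes ordinary Lipschitz BSDEs contractive is of no use here — it would amplify the terminal contribution by $e^{\beta T}$ rather than damp it. One is therefore forced to work on a short horizon and then patch; and while checking that the concatenated compensator genuinely realizes the decomposition and minimality conditions of \eqref{nonlinearyz} on all of $[0,T]$ is routine, it does require some care.
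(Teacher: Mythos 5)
Your proposal is correct and follows essentially the same route as the paper: reduce to the frozen-driver problem of Proposition \ref{prop7}, control the deterministic compensator through the backward Skorokhod stability estimate \eqref{diffK} (noting the common anchor $a=\E[\xi]$), obtain a contraction on $\mathcal{S}^2\times\mathcal{H}^2$ for a small horizon, and then extend to $[0,T]$ by backward patching over a partition, with uniqueness blockwise. The only differences are cosmetic (Doob versus B-D-G bookkeeping of the $T$-dependence of the contraction constant).
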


\begin{proof}
Given $U^i\in \mathcal{S}^2$, $V^i\in \mathcal{H}^2$, $i=1,2$, Proposition \ref{prop7} ensures that the following BSDE with double mean reflection admits a unique solution $(Y^i,Z^i,K^i)$,
\begin{displaymath}
\begin{cases}
Y^i_t=\xi+\int_t^T f(s,U^i_s,V^i_s)ds-\int_t^T Z^i_s dB_s+K^i_T-K^i_t, \\
\E[L(t,Y^i_t)]\leq 0\leq \E[R(t,Y^i_t)], \\
K^i_t=K^{i,R}_t-K^{i,L}_t, \int_0^T \E[R(t,Y^i_t)]dK_t^{i,R}=\int_0^T \E[L(t,Y^i_t)]dK^{i,L}_t=0.
\end{cases}
\end{displaymath}
We define
\begin{align*}
\hat{F}_t=F^1_t-F^2_t, \textrm{ where } F=Y,Z,K,U,V, \ \hat{f}_t=f(t,U^1_t,V^1_t)-f(t,U^2_t,V^2_t). 
\end{align*}
Simple calculation yields that 
\begin{align*}
\hat{Y}_t=\E_t[\int_t^T \hat{f}_sds]+\hat{K}_T-\hat{K}_t.
\end{align*}
Applying B-D-G inequality and the Lipschitz continuity of $f$, we obtain that there exists a constant $M(\lambda)$ depending on $\lambda$, such that 
\begin{align*}
\E[\sup_{t\in[0,T]}|\hat{Y}_t|^2]\leq M(\lambda)\E[(\int_0^T [|\hat{U}_t|+|\hat{V}_t|]dt)^2]+2\sup_{t\in[0,T]}|\hat{K}_t|^2.
\end{align*}
By the proof of Proposition \ref{prop7} and recalling \eqref{diffK}, we have
\begin{align*}
\sup_{t\in[0,T]}|\hat{K}_t|\leq  M(c,C)\{\sup_{t\in[0,T]}|s^1_t-s^2_t|+\sup_{(t,x)\in[0,T]\times\mathbb{R}}|l^1(t,x)-l^2(t,x)|\vee \sup_{(t,x)\in[0,T]\times\mathbb{R}}|r^1(t,x)-r^2(t,x)|\},
\end{align*}
where 
\begin{align*}
&s^i_t=\E[\int_0^t f(s,U^i_s,V^i_s)ds], \ \widetilde{Y}^i_t=\xi+\int_t^T f(s,U_s^i,V_s^i)ds-(M^i_T-M^i_t), \\
&M^i_t=\E_t[\xi+\int_0^T f(s,U^i_s,V^i_s)ds]-\E[\xi+\int_0^T f(s,U^i_s,V^i_s)ds],\\
&l^i(t,x)=\E[L(t,\widetilde{Y}^i_t-\E[\widetilde{Y}^i_t]+x)],\  r^i(t,x)=\E[R(t,\widetilde{Y}^i_t-\E[\widetilde{Y}^i_t]+x)].
\end{align*}
Then, we deduce that there exists a positive constant $M(c,C,\lambda)$, such that
\begin{align*}
\sup_{t\in[0,T]}|\hat{K}_t|^2\leq M(c,C,\lambda)\E[(\int_0^T [|\hat{U}_t|+|\hat{V}_t|]dt)^2].
\end{align*}
All the above analysis indicates that 
\begin{align*}
\E[\sup_{t\in[0,T]}|\hat{Y}_t|^2]\leq M(c,C,\lambda)\E[(\int_0^T [|\hat{U}_t|+|\hat{V}_t|]dt)^2].
\end{align*}

On the other hand, since
\begin{align*}
\int_0^T \hat{Z}_s dB_s=\int_0^T\hat{f}_s ds-\hat{Y}_0+\hat{K}_T-\hat{K}_0,
\end{align*}
we obtain that 
\begin{align*}
\E[\int_0^T |\hat{Z}_s|^2ds]\leq M(c,C,\lambda)\E[(\int_0^T [|\hat{U}_t|+|\hat{V}_t|]dt)^2].
\end{align*}
We finally deduce that 
\begin{align*}
\E[\sup_{t\in[0,T]}|\hat{Y}_t|^2+\int_0^T |\hat{Z}_s|^2ds]\leq &M(c,C,\lambda)\E[(\int_0^T [|\hat{U}_t|+|\hat{V}_t|]dt)^2]\\
\leq &M(c,C,\lambda)T\max(1,T)\E[\sup_{t\in[0,T]}|\hat{U}_t|^2+\int_0^T|\hat{V}_s|^2ds].
\end{align*}
Choosing $T$ small enough such that $M(c,C,\lambda)T\max(1,T)<1$, we have constructed a contraction mapping. Therefore, when $T$ small enough, the BSDE with double mean reflections \eqref{nonlinearyz} has a unique solution.

For the general case, let us choose $n\geq 1$ such that $\frac{1}{n^2}M(c,C,\lambda)T\max(n,T)<1$. For $i=0,1,\cdots,n$, set $T_i:=\frac{iT}{n}$. By backward induction, for $i=n,n-1,\cdots,1$, there exists a unique solution $(Y^i,Z^i,K^i)$ to the following BSDE with double mean reflection on the interval $[T_{i-1},T_i]$
\begin{displaymath}
\begin{cases}
Y^i_t=Y^{i+1}_{T_i}+\int_t^{T_i} f(s,Y^i_s,Z^i_s)ds-\int_t^{T_i} Z^i_s dB_s+K^i_{T_i}-K^i_t, \\
\E[L(t,Y^i_t)]\leq 0\leq \E[R(t,Y^i_t)],  t\in[T_{i-1},T_i]\\
K^i_{T_{i-1}}=0, \ K^i_t=K^{i,R}_t-K^{i,L}_t, \int_{T_{i-1}}^{T_i} \E[R(t,Y^i_t)]dK_t^{i,R}=\int_{T_{i-1}}^{T_i} \E[L(t,Y^i_t)]dK^{i,L}_t=0,
\end{cases}
\end{displaymath}
where $Y^{n+1}_T=\xi$. Let us define $(Y,Z,K)$ on $[0,T]$ by setting
\begin{align*}
Y_t=Y^1_0 I_{\{0\}}(t)+\sum_{i=1}^n Y^i_t I_{(T_{i-1},T_i]}(t), \ Z_t=\sum_{i=1}^n Z^i_t I_{(T_{i-1},T_i]}(t),
\end{align*}
and $K_t=K^1_t$ on $[T_0,T_1]$ and for $i=2,\cdots,n$, $K_t=K^i_t+K_{T_{i-1}}$ on $[T_{i-1},T_i]$ ($K^R,K^L$ are defined similarly). It is straightforward to check that $(Y,Z,K)$ is a solution to \eqref{nonlinearyz}. Uniqueness is a direct consequence from the uniqueness on each small interval. The proof is complete. 
\end{proof}

\subsection{Properties of solution to BSDEs with double mean reflections}

Recall that the Skorokhod condition, also called the flat-off condition, ensures the minimality of the first component of the solution to classical reflected BSDEs and BSDE with mean reflection. It is worth pointing out that for the mean reflected case, the minimality condition only holds for some typical coefficient since the constraint is given in expectation instead of pointwisely in this case.

For the double mean reflected case, consider the following two mean reflected BSDEs with single flat-off condition
\begin{equation}\label{nonlinearyz1}
\begin{cases}
Y_t=\xi+\int_t^T f(s,Y_s,Z_s)ds-\int_t^T Z_s dB_s+K_T-K_t, \\
\E[L(t,Y_t)]\leq 0\leq \E[R(t,Y_t)], \\
K_t=K^R_t-K^L_t,\ K^R,K^L\in I[0,T], \ \int_0^T \E[R(t,Y_t)]dK_t^R=0,
\end{cases}
\end{equation} 
and 
\begin{equation}\label{nonlinearyz2}
\begin{cases}
Y_t=\xi+\int_t^T f(s,Y_s,Z_s)ds-\int_t^T Z_s dB_s+K_T-K_t, \\
\E[L(t,Y_t)]\leq 0\leq \E[R(t,Y_t)], \\
K_t=K^R_t-K^L_t, \ K^R,K^L\in I[0,T], \ \int_0^T \E[L(t,Y_t)]dK^L_t=0.
\end{cases}
\end{equation}
Roughly speaking, \eqref{nonlinearyz1} only imposes flat-off condition on the lower obstacle while \eqref{nonlinearyz2} only imposes flat-off condition on the upper obstacle. Therefore, \eqref{nonlinearyz1} only ensures the minimality condition on the force which to push the solution upwards while \eqref{nonlinearyz2}  only ensures the minimality condition on the force which to pull the solution downwards. It is natural to conjecture that the solution of BSDE with double mean reflection should lie between the solution of \eqref{nonlinearyz1} and \eqref{nonlinearyz2}.

\begin{proposition}\label{prop11}
Suppose that the coefficient $f$ satisfying Assumption \ref{assf} is of the following form
\begin{align}\label{equation24}
f:(t,y,z)\mapsto a_t y+h(t,z),
\end{align}
where $a$ is a deterministic and bounded measurable function. Let $\xi\in L^2(\mathcal{F}_T)$ and $L,R$ satisfy Assumption \ref{ass2} with $\E[L(T,\xi)]\leq 0\leq \E[R(T,\xi)]$. Suppose that $(\underline{Y},\underline{Z},\underline{K})$, $(\bar{Y},\bar{Z},\bar{K})$, $(Y,Z,K)$ is a solution to \eqref{nonlinearyz1}, \eqref{nonlinearyz2}, \eqref{nonlinearyz}, respectively. Then, for any $t\in[0,T]$, we have $\underline{Y}_t\leq Y_t\leq \bar{Y}_t$.
\end{proposition}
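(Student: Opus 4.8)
The plan is to prove the two inequalities $\underline Y_t \le Y_t$ and $Y_t \le \bar Y_t$ separately, exploiting the structure of the flat-off conditions. I will focus on $Y_t \le \bar Y_t$; the other inequality is symmetric (interchanging the roles of $L$ and $R$, $K^L$ and $K^R$, and reversing the relevant inequalities). First I would linearize: since $f(t,y,z) = a_t y + h(t,z)$, the difference $\delta Y := Y - \bar Y$ and $\delta Z := Z - \bar Z$ satisfy a linear BSDE
\begin{align*}
\delta Y_t = \int_t^T a_s\, \delta Y_s\, ds - \int_t^T \delta Z_s\, dB_s + (K_T - K_t) - (\bar K_T - \bar K_t).
\end{align*}
Introducing the exponential discount factor $\Gamma_t = \exp(\int_0^t a_s\, ds)$ (deterministic and bounded since $a$ is), an integration by parts removes the linear term and gives, after taking conditional expectation,
\begin{align*}
\Gamma_t\, \delta Y_t = \E_t\Big[\int_t^T \Gamma_s\, d(K_s - \bar K_s)\Big].
\end{align*}
So everything reduces to comparing the deterministic bounded-variation drivers $K$ and $\bar K$.

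The key step is to show that $K_t - K_s \le \bar K_t - \bar K_s$ for $s \le t$, or more precisely a pathwise comparison of the increments, which by the displayed identity yields $\delta Y_t \le 0$. This is where the flat-off condition of \eqref{nonlinearyz2} is essential: $\bar K$ obeys only the upper flat-off condition $\int_0^T \E[L(t,\bar Y_t)]\, d\bar K^L_t = 0$, so $\bar K^R$ is allowed to be ``larger than necessary'' — it need not be minimal — whereas $K$ from \eqref{nonlinearyz} obeys both flat-off conditions and in particular $\int_0^T \E[R(t,Y_t)]\, dK^R_t = 0$ forces $K^R$ to push up only when the upper constraint is tight. I would argue by contradiction along the lines of the uniqueness proof of Proposition~\ref{prop7}: suppose $K_T - K_{t_1} > \bar K_T - \bar K_{t_1}$ for some $t_1$, let $t_2 = \inf\{t \ge t_1 : K_T - K_t = \bar K_T - \bar K_t\}$, and on $[t_1, t_2)$ one has $X_t + (K_T - K_t) > X_t + (\bar K_T - \bar K_t)$ where $X$ is the common ``free'' part (here one must be careful: since the drivers differ only through $K$ vs. $\bar K$, the process $X_t = \E_t[\xi + \int_t^T h(s,Z_s)\,ds]$ is genuinely common only after the $\Gamma$-reduction, so I would run the contradiction argument on the $\Gamma$-weighted quantities $\Gamma_t Y_t$ and $\Gamma_t \bar Y_t$, whose ``martingale parts'' coincide). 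Then strict monotonicity of $R$ gives $\E[R(t, Y_t)] > \E[R(t,\bar Y_t)] \ge 0$ on $[t_1,t_2)$, so the flat-off condition for $K$ forces $dK^R = 0$ there; combined with the fact that $\bar K^L$ can only decrease the difference, an increment bookkeeping identical to the one in Proposition~\ref{prop7} produces $K_T - K_{t_2} > \bar K_T - \bar K_{t_2}$, contradicting the definition of $t_2$.

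For the lower bound $\underline Y_t \le Y_t$ I would repeat the argument with $L$ in place of $R$: now \eqref{nonlinearyz1} imposes only the lower flat-off condition, so its $\underline K^L$ (the downward push) is not minimal while $K$'s is, and strict monotonicity of $L$ drives the contradiction in the same way. The main obstacle, as noted, is organizing the contradiction argument correctly in the presence of the linear $y$-term: the clean comparison of increments only works after factoring out $\Gamma$, and one has to check that $\Gamma_t Y_t$ and $\Gamma_t \bar Y_t$ (respectively $\Gamma_t \underline Y_t$) differ by a conditional expectation of a deterministic bounded-variation integrator, so that the pathwise reasoning of Proposition~\ref{prop7} applies verbatim. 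Once that reduction is in place, the remaining steps are routine and mirror the uniqueness proof already carried out.
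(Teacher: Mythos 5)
Your overall architecture --- remove the $a_t y$ term by an exponential factor, then run the contradiction/bookkeeping argument of Proposition \ref{prop7} on the deterministic compensators, using strict monotonicity of $R$ (resp.\ $L$) together with the two different flat-off conditions --- is the same as the paper's, and the bookkeeping half of your sketch is fine. The genuine gap is in how you dispose of the $z$-dependence. Your displayed equation for $\delta Y$ drops the term $h(s,Z_s)-h(s,\bar Z_s)$, so the identity $\Gamma_t\,\delta Y_t=\E_t[\int_t^T\Gamma_s\,d(K_s-\bar K_s)]$ is not justified: the deterministic discount $\Gamma$ removes only the $a_ty$ part, not the difference of the $h$-parts, and a priori $Z\neq\bar Z$, so your assertion that after the $\Gamma$-reduction the two solutions have the same ``martingale part'' is precisely what must be proved, not an observation. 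The paper obtains it as follows: transform the whole reflected problem by $e^{A_t}$ (note the loss functions must be transformed as well, $\tilde L(t,y)=L(t,e^{-A_t}y)$, $\tilde R(t,y)=R(t,e^{-A_t}y)$, since the constraints bear on $Y_t$ and not on $\Gamma_tY_t$ --- a point missing from your sketch), so that the driver no longer depends on $y$; then, because $K$ and $\bar K$ are deterministic, $(Y-(K_T-K),Z)$ and $(\bar Y-(\bar K_T-\bar K),\bar Z)$ solve the \emph{same} classical BSDE with terminal value $\xi$, and uniqueness for classical BSDEs yields $Z=\bar Z$ and the pathwise identity $Y_t-\bar Y_t=(K_T-K_t)-(\bar K_T-\bar K_t)$. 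It is this identity that gives $Y_t>\bar Y_t$ a.s.\ on $[t_1,t_2)$ and hence the strict inequalities between the expectations that trigger the flat-off conditions.

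A Girsanov patch (linearizing $h$ in $z$ with a bounded process $\beta$) would repair the representation formula, giving $\Gamma_t\,\delta Y_t=\E^{\Q}_t[\int_t^T\Gamma_s\,d(K_s-\bar K_s)]$ under an equivalent measure, but it would not rescue your contradiction argument: knowing $K_T-K_t>\bar K_T-\bar K_t$ only for $t\in[t_1,t_2)$ does not determine the sign of $\int_t^T\Gamma_s\,d(K_s-\bar K_s)$; integrating by parts, this equals $\Gamma_tD_t+\int_t^T a_s\Gamma_sD_s\,ds$ with $D_s=(K_T-K_s)-(\bar K_T-\bar K_s)$, and $D_s$ is uncontrolled outside $[t_1,t_2]$ while $a$ may change sign. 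So the conditional-expectation route does not produce the pointwise comparison $Y_t>\bar Y_t$ that the strict-monotonicity step requires; you genuinely need the equality of the free parts via the uniqueness trick above (which is how the paper argues), after which the rest of your argument goes through verbatim.
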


\begin{proof}
It suffices to prove the second inequality since the first one can be proved analogously. To this end, we first consider the case where $f$ does not depends on $y$. In this case, it is clear that $(Y-(K_T-K),Z)$ and $(\bar{Y}-(\bar{K}_T-\bar{K}),\bar{Z})$ are solutions to classical BSDE with terminal value $\xi$ and coefficient $f$. By the uniqueness result for BSDEs, we have
\begin{align}\label{equation25}
Y_t-(K_T-K_t)=\bar{Y}_t-(\bar{K}_T-\bar{K}_t), \ t\in[0,T]. 
\end{align} 
It remains to show that $K_T-K\leq \bar{K}_T-\bar{K}$. Suppose that there exists a $t_1<T$, such that 
\begin{align*}
K_T-K_{t_1}> \bar{K}_T-\bar{K}_{t_1}.
\end{align*}
Set 
\begin{align*}
t_2=\inf\{t\geq t_1: K_T-K_t\leq \bar{K}_T-\bar{K}_t\}.
\end{align*}
By the continuity of $K$ and $\bar{K}$, we have 
\begin{align}\label{equation26}
K_T-K_{t_2}= \bar{K}_T-\bar{K}_{t_2}, \ K_T-K_t> \bar{K}_T-\bar{K}_t, \ t\in [t_1,t_2).
\end{align}
Combining with Eq. \eqref{equation25}, we obtain that $Y_t>\bar{Y}_t$, $t\in[t_1,t_2)$. It follows that for any $t\in[t_1,t_2)$
\begin{align*}
\E[R(t,Y_t)]>\E[R(t,\bar{Y}_t)]\geq 0\geq \E[L(t,{Y}_t)]>\E[L(t,\bar{Y}_t)].
\end{align*}
Due to the flat-off condition, we deduce that $d K^R_t=d \bar{K}^L_t=0$, $t\in[t_1,t_2)$. Recalling \eqref{equation26} and noting that $K^L$ is nondecreasing, we have
\begin{align*}
&\bar{K}_T-(\bar{K}^R_{t_1}-\bar{K}^L_{t_2})=\bar{K}_T-(\bar{K}^R_{t_1}-\bar{K}^L_{t_1})=\bar{K}_T-\bar{K}_{t_1}\\
<&K_T-K_{t_1}=K_T-(K^R_{t_1}-K^L_{t_1})=K_T-(K^R_{t_2}-K^L_{t_1})\leq K_T-(K^R_{t_2}-K^L_{t_2})\\
=&K_T-K_{t_2}=\bar{K}_T-\bar{K}_{t_2}=\bar{K}_T-(\bar{K}^R_{t_2}-\bar{K}^L_{t_2}),
\end{align*}
which implies that $\bar{K}^R_{t_1}>\bar{K}^R_{t_2}$, contradicting to the fact that $\bar{K}^R$ is nondecreasing.

For the case that $f$ takes the form \eqref{equation24}, set $A_t:=\int_0^t a_s ds$, $t\in[0,T]$ and define
\begin{align*}
\tilde{Y}_t=e^{A_t} Y_t,  \ \tilde{Z}_t=e^{A_t} Z_t, \ \tilde{K}_t=e^{A_t} K_t, \ \tilde{K}^R_t=e^{A_t} K^R_t, \ \tilde{K}^L_t=e^{A_t} K^L_t.
\end{align*}
It is easy to check that $(\tilde{Y},\tilde{Z},\tilde{K})$ is the solution to BSDE with double mean reflections associated to parameters
\begin{align*}
\tilde{\xi}=e^{A_T}\xi, \ \tilde{f}(t,z)=e^{A_t}f(t,e^{-A_t}z), \ \tilde{L}(t,y)=L(t,e^{-A_t}y), \ \tilde{R}(t,y)=R(t,e^{-A_t}y).
\end{align*}
Then, we have transformed the problem to the case that the coefficient does not depend on $y$. The proof is complete.
\end{proof}

\begin{remark}
(i) Proposition \ref{prop11} provides an alternative proof of the uniqueness result to the BSDE with double mean reflections whose coefficient is of form \eqref{equation24}. 

\noindent (ii) Suppose that $L=-\infty$. Proposition \ref{prop11} degenerates to Theorem 11 in \cite{BEH}. That is, a deterministic flat solution to a BSDE with lower mean reflection is minimal among all the deterministic solutions. Similarly, supposing that $R=+\infty$, we conclude that a deterministic flat solution to a BSDE with upper mean reflection is maximal among all the deterministic solutions.
\end{remark}

Let us recall that the first component of solution to a doubly reflected BSDE coincides with the value function of an appropriate Dynkin game. It is natural to consider if the solution to a BSDE with double mean reflections corresponds to some optimization problem. Fortunately, the answer is affirmative.  Let $(Y,Z,K)$ be the solution to the BSDE with double mean reflection \eqref{nonlinearyz}. We define
\begin{align*}
\bar{Y}_t:=\E_t[\xi+\int_t^T f(s,Y_s,Z_s)ds].
\end{align*}
It is easy to check that 
\begin{align*}
Y_t=\bar{Y}_t-\E[\bar{Y}_t]+\E[Y_t].
\end{align*}
Recalling Lemma \ref{proofofass1} and noting that $\bar{Y}\in \mathcal{S}^2$, $r^{\bar{Y}}(t,\cdot),l^{\bar{Y}}(t,\cdot)$ are continuous and strictly increasing, $t\in[0,T]$, where $r^{\bar{Y}},l^{\bar{Y}}$ are defined in \eqref{operator}. Besides, we may check that 
\begin{align*}
\lim_{x\rightarrow \infty}r^{\bar{Y}}(t,x)=\lim_{x\rightarrow \infty}l^{\bar{Y}}(t,x)=+\infty, \ 
\lim_{x\rightarrow -\infty}r^{\bar{Y}}(t,x)=\lim_{x\rightarrow -\infty}l^{\bar{Y}}(t,x)=-\infty.
\end{align*}
Therefore, for any fixed $t\in[0,T]$, each of the following equation admits a unique solution
\begin{align*}
\E[R(t,\bar{Y}_t-\E[\bar{Y}_t]+x]=0, \ \E[L(t,\bar{Y}_t-\E[\bar{Y}_t]+x]=0,
\end{align*}
which are denoted by $\bar{r}_t$ and $\bar{l}_t$, respectively. 

\begin{theorem}\label{theorem3.6}
Suppose that $(Y,Z,K)$ is the solution to the BSDE with double mean reflections \eqref{nonlinearyz}. Then, for  any $t\in[0,T]$, we have
\begin{align*}
\E[Y_t]=\sup_{q\in[t,T]}\inf_{s\in[t,T]}R_t(s,q)=\inf_{s\in[t,T]}\sup_{q\in[t,T]}R_t(s,q),
\end{align*}
where 
\begin{align*}
R_t(s,q)=\E[\int_t^{s\wedge q} f(u,Y_u,Z_u)du+\xi I_{\{s\wedge q=T\}}]+\bar{r}_q I_{\{q< T,q\leq s\}}+\bar{l}_s I_{\{s<q\}}.
\end{align*}
\end{theorem}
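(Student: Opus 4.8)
The plan is to reduce the identity to a deterministic Dynkin-game representation for the second component of a backward Skorokhod problem, exactly as the BSDE machinery in this paper is built on such a reduction. First I would write $\bar Y_t=\E_t[\xi+\int_t^T f(s,Y_s,Z_s)\,ds]$, so that $Y_t=\bar Y_t-\E[\bar Y_t]+\E[Y_t]$, and set $s_t=\E[\int_0^t f(u,Y_u,Z_u)\,du]$, $a=\E[\xi]$. With the boundary functions $l(t,x)=l^{\bar Y}(t,x)$, $r(t,x)=r^{\bar Y}(t,x)$ from \eqref{operator}, the construction in the proof of Proposition \ref{prop7} identifies $\E[Y_t]$ with $x_t$, where $(x,K)=\mathbb{BSP}_l^r(s,a)$; here $\bar r_t,\bar l_t$ are precisely the functions solving $r(t,x)=0$ and $l(t,x)=0$, i.e.\ the points $\phi_t,\psi_t$ in the Remark after Theorem \ref{FSP} translated through the time reversal. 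So the theorem will follow once I prove the abstract statement: if $(x,k)=\mathbb{BSP}_l^r(s,a)$ and $\bar r_t,\bar l_t$ are the zeros of $r(t,\cdot),l(t,\cdot)$, then
\[
x_t=\sup_{q\in[t,T]}\inf_{u\in[t,T]}\Theta_t(u,q)=\inf_{u\in[t,T]}\sup_{q\in[t,T]}\Theta_t(u,q),
\]
where $\Theta_t(u,q)=(s_{u\wedge q}-s_t)+a\,I_{\{u\wedge q=T\}}+\bar r_q I_{\{q<T,\,q\le u\}}+\bar l_u I_{\{u<q\}}$ — observe that substituting $s_{u\wedge q}-s_t=\E[\int_t^{u\wedge q}f(v,Y_v,Z_v)\,dv]$ and $a\,I_{\{u\wedge q=T\}}$ with $\xi I$ inside the expectation reproduces $R_t(u,q)$, and that the expectation and the stopping indices commute because the indices here are deterministic times, not stopping times.

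The core is therefore the deterministic lemma, which I would prove via the explicit formula for $k$ in the Remark after Theorem \ref{FSP}. Transporting that formula through the time-reversal $\bar s_t=a+s_T-s_{T-t}$, $\bar l(t,x)=l(T-t,x)$, $\bar r(t,x)=r(T-t,x)$ used in the proof of Theorem \ref{BSP}, one gets a closed-form expression for $x_t=\bar x_{T-t}=\bar s_{T-t}+\bar k_{T-t}$. The functions $\phi,\psi$ in that Remark satisfy $\bar l(\tau,\bar s_\tau+\phi_\tau)=0$, $\bar h(\tau,\bar s_\tau+\psi_\tau)=0$; in original time these become, after using $x_t=a+s_T-s_t+k_T-k_t$, the shifted boundary levels, and $a+s_T-s_t+\phi=\bar r$-type expressions. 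Concretely I expect $x_t$ to reduce to a $\min$–$\max$ over nested suprema/infima of the two ``pushed'' trajectories $t\mapsto (s_t-\text{const})+\bar l_t$ and $t\mapsto (s_t-\text{const})+\bar r_t$ on $[t,T]$, which is exactly the $\inf$–$\sup$ form above once the running integral $s_{u\wedge q}-s_t$ is factored out. The standard Dynkin-game identity (the min–max equals the max–min here because the two payoff families are ordered: $\bar l_t\le \bar r_t$ by Assumption \ref{ass2}(4)) then closes the argument; alternatively one can verify both inequalities directly by choosing, for the ``$\le$'' side, $q$ to be the last time before $t$'s horizon that $k$ increased and $u=T$, and symmetrically for ``$\ge$''.

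I would organize the write-up as: (1) set up $\bar Y$, $s$, $a$, $l$, $r$, $\bar r$, $\bar l$ and record $\E[Y_t]=x_t$ from Proposition \ref{prop7}; (2) reduce $R_t(u,q)$ to $\E[\cdot]$ applied to $\Theta_t(u,q)$, noting linearity of expectation and measurability; (3) state and prove the deterministic min–max formula for $\mathbb{BSP}_l^r(s,a)$ using the explicit representation of $k$; (4) conclude by taking expectations. The main obstacle is step (3): getting from the nested $\min$/$\sup$/$\inf$ formula for $k$ (which is written for the \emph{forward} Skorokhod problem on $[0,\infty)$ with a c\`adl\`ag input) to the clean two-player game value on $[t,T]$ requires carefully tracking the time-reversal, handling the boundary/initial terms $[-\phi_0^-]\vee\sup\psi$ correctly, and matching the indicator structure $I_{\{q<T,q\le u\}}$ versus $I_{\{u<q\}}$ with the ``which obstacle is active'' alternatives in the formula. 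A secondary subtlety is the endpoint $u\wedge q=T$: one must check the terminal value $a$ (resp.\ $\xi$) appears with the right indicator and that $\bar r_T,\bar l_T$ are consistent with $l(T,a)\le 0\le r(T,a)$ so no spurious jump is introduced at $T$.
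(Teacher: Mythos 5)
Your reductions (1)--(2) are fine: since $\widetilde Y$ in the proof of Proposition \ref{prop7} coincides with $\bar Y_t=\E_t[\xi+\int_t^T f(s,Y_s,Z_s)ds]$, one indeed has $\E[Y_t]=x_t$ with $(x,K)=\mathbb{BSP}_{l^{\bar Y}}^{r^{\bar Y}}(s,a)$, and because $s,q$ are deterministic times, $R_t(s,q)$ is a deterministic functional of $s_\cdot$, $a$, $\bar r_\cdot$, $\bar l_\cdot$. The genuine gap is step (3), which carries the entire content of the theorem: you never prove the deterministic min--max representation of the backward Skorokhod map, you only state that you ``expect'' the time-reversed explicit formula for $k$ to rearrange into the $\sup\inf=\inf\sup$ form. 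The argument you offer to close it is not sufficient: ordering of the two payoff families only gives the trivial inequality $\sup_q\inf_s R_t(s,q)\le\inf_s\sup_q R_t(s,q)$; the reverse inequality, and the identification of the common value with $x_t$, is exactly what has to be established, e.g.\ by exhibiting $\varepsilon$-optimal times, and cannot be waved through as a ``standard Dynkin-game identity.'' Moreover the ordering you invoke is stated backwards: from $L\le R$ and strict monotonicity one gets $\bar r_t\le\bar l_t$ (and indeed $\bar r_t\le\E[Y_t]\le\bar l_t$, which is inequality \eqref{barrbarl} in the paper), not $\bar l_t\le\bar r_t$; with your ordering the game would be degenerate. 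There are also unresolved bookkeeping issues you yourself flag (the $[-\phi_0^-]\vee\cdot$ boundary term, the direction of the inner suprema after time reversal, and matching $I_{\{q<T,\,q\le s\}}$ versus $I_{\{s<q\}}$), none of which is carried out.

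For comparison, the paper does not go through the explicit Skorokhod formula at all. It works directly with the solution: after recording $\bar r_t\le \E[Y_t]\le\bar l_t$, it defines the deterministic times $s^\varepsilon_t=\inf\{s>t:\E[Y_s]\ge\bar l_s-\varepsilon\}\wedge T$ and $q^\varepsilon_t=\inf\{s>t:\E[Y_s]\le\bar r_s+\varepsilon\}\wedge T$, uses the flat-off condition to show that $K^L$ (resp.\ $K^R$) does not increase before these times so that $K$ is monotone there, and then verifies $-\varepsilon+R_t(s^\varepsilon_t,q)\le\E[Y_t]\le R_t(s,q^\varepsilon_t)+\varepsilon$ by taking expectations in the equation $Y_t=\E_t[\cdot]+K_T-K_t$. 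If you want to salvage your route, the honest way is to prove your deterministic lemma by the same device ($\varepsilon$-optimal times for the deterministic game defined by $s_\cdot,\bar r_\cdot,\bar l_\cdot$), at which point the detour through time reversal and the closed-form expression for $k$ buys you nothing over arguing on the BSDE directly.
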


\begin{proof}
Fix $t\in[0,T]$. It suffices to show that for any $\varepsilon>0$, there exist $s^\varepsilon_t,q^\varepsilon_t\in[t,T]$, such that for all $s,q\in[t,T]$,
\begin{equation}\label{equa3.2}
-\varepsilon+R_t(s^\varepsilon_t,q)\leq \E[Y_t]\leq R_t(s,q^\varepsilon_t)+\varepsilon.
\end{equation}

First, noting that for all $t\in[0,T]$
\begin{align*}
&\E[R(t,\bar{Y}_t-\E[\bar{Y}_t]+\bar{r}_t)]=0\leq \E[R(t,Y_t)]=\E[R(t,\bar{Y}_t-\E[\bar{Y}_t]+\E[Y_t])], \\
&\E[L(t,\bar{Y}_t-\E[\bar{Y}_t]+\bar{l}_t)]=0\geq \E[L(t,Y_t)]=\E[L(t,\bar{Y}_t-\E[\bar{Y}_t]+\E[Y_t])],
\end{align*}
it follows that 
\begin{align}\label{barrbarl}
\bar{r}_t\leq \E[Y_t]\leq \bar{l}_t, \ t\in[0,T].
\end{align}
 Set 
\begin{align*}
s^\varepsilon_t=\inf\{s>t:\E[Y_s]\geq \bar{l}_s-\varepsilon\}\wedge T.
\end{align*}
Since $\E[Y_s]<\bar{l}_s-\varepsilon$ on $s\in(t,s^\varepsilon_t)$, we have
\begin{align*}
\E[L(s,Y_s)]=\E[L(s,\bar{Y}_s-\E[\bar{Y}_s]+\E[Y_s])]<\E[L(s,\bar{Y}_s-\E[\bar{Y}_s]+\bar{l}_s)]=0,
\end{align*}
which implies that $K^L_q-K^L_t=0$ for $q\in(t,s^\varepsilon_t)$ and thus $K_q-K_t=K^R_q-K^R_t\geq 0$ for $q\in(t,s^\varepsilon_t)$. By the continuity of $K$, $K_q-K_t\geq 0$ holds for $q\in(t,s^\varepsilon_t]$. Simple calculation yields that for $q\in(t,s^\varepsilon_t]$
\begin{align*}
R_t(s^\varepsilon_t,q)=&\E[\int_t^ q f(s,Y_s,Z_s)ds+\xi I_{\{q=T\}}]+\bar{r}_q I_{\{q<T\}}\\
\leq &\E[\int_t^ q f(s,Y_s,Z_s)ds+\xi I_{\{q=T\}}]+\E[Y_q] I_{\{q<T\}}+K_q-K_t\\
=&\E[Y_t+\int_t^q Z_s dB_s]=\E[Y_t],
\end{align*}
where we have used \eqref{barrbarl} in this inequality. On the other hand, for $q\in(s^\varepsilon_t,T]$,  we have
\begin{align*}
R_t(s^\varepsilon_t,q)=&\E[\int_t^{s^\varepsilon_t} f(s,Y_s,Z_s)ds]+\bar{l}_{s^\varepsilon_t}\\
\leq &\E[\int_t^{s^\varepsilon_t}  f(s,Y_s,Z_s)ds]+\E[Y_{s^\varepsilon_t} ] +\varepsilon+K_{s^\varepsilon_t} -K_t\\
=&\E[Y_t+\int_t^{s^\varepsilon_t}  Z_s dB_s]+\varepsilon=\E[Y_t]+\varepsilon,
\end{align*}
where we have used \eqref{barrbarl} and the definition of $s^\varepsilon_t$ in this inequality. All the above analysis implies the first inequality in \eqref{equa3.2}. Set
\begin{align*}
q^\varepsilon_t=\inf\{s>t: \E[Y_s]\leq \bar{r}_s+\varepsilon\}\wedge T.
\end{align*}
By a similar analysis as above, we could obtain the second inequality in \eqref{equa3.2}. The proof is complete.
\end{proof}

\begin{remark}
Theorem \ref{theorem3.6} is a generalization of Theorem 3.6 in \cite{FS} to the case when the two reflecting constraints are nonlinear.
\end{remark}

\section{Construction by penalization in a special case}

For the classical reflected BSDEs with both single obstacle and double obstacles, an effective method to construct solutions is approximation via penalization. However, for the mean reflected case, the constraint only integrates the distribution of $Y$ but not the pointwise value of $Y$. Therefore, the classical proof is invalid. However, Briand et al. show that the solution of BSDE with linear mean reflection is the limit of penalized mean-field BSDEs (see Proposition 6 in \cite{BEH}). In this section, we apply an analogous penalization method to construct the solution to the following BSDE with two linear reflections whose parameters are given by $(\xi,f,l,r)$: 
\begin{equation}\label{linearcase}
\begin{cases}
Y_t=\xi+\int_t^T f(s,Y_s,Z_s)ds-\int_t^T Z_s dB_s+K_T-K_t, \\
l_t\leq \E[Y_t]\leq r_t, \\
K_t=K^l_t-K^r_t, \int_0^T (\E[Y_t]-l_t)dK_t^l=\int_0^T (r_t-\E[Y_t])dK^r_t=0,
\end{cases}
\end{equation}
where $K^l,K^r\in I[0,T]$. 


Before establishing the existence result by approximation via penalization, we present some a priori estimates similar with the classical reflected BSDEs, which will provide a different proof for the uniqueness result for BSDEs with double mean reflections.
\begin{proposition}\label{uniquenesslinear}
Suppose that $f^i$ satisfy Assumption \ref{assf}, $i=1,2$. Given $l,r\in C[0,T]$ and $\xi^i\in L^2(\mathcal{F}_T)$ with $l_T\leq \E[\xi]\leq r_T$, let $(Y^i,Z^i,K^i)$ be the solution to the BSDE with two linear reflections $(\xi^i,f^i,l,r)$. Then, there exists a constant $C$ depending on $\lambda,T$, such that 
\begin{align*}
\E[\int_0^T |\hat{Y}_t|^2dt]+\E[\int_0^T |\hat{Z}_t|^2 dt]\leq C\E[|\hat{\xi}|^2+\int_0^T |\hat{f}_t|^2dt],
\end{align*}
where $\hat{Y}_t=Y^1_t-Y^2_t$, $\hat{Z}_t=Z^1_t-Z^2_t$, $\hat{\xi}=\xi^1-\xi^2$ and $\hat{f}_t=f^1(t,Y^1_t,Z^1_t)-f^2(t,Y^1_t,Z_t^1)$.
\end{proposition}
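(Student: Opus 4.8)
The plan is to adapt the classical a priori estimate for doubly reflected BSDEs to the mean-reflected setting, exploiting the fact that the compensators $K^1,K^2$ are deterministic and that the difference process $\hat Y$ solves a linear BSDE driven by $\hat f$ with a deterministic bounded-variation perturbation $\hat K$. First I would write, for $t\in[0,T]$,
\begin{align*}
\hat Y_t=\hat\xi+\int_t^T \bigl(f^1(s,Y^1_s,Z^1_s)-f^2(s,Y^2_s,Z^2_s)\bigr)\,ds-\int_t^T\hat Z_s\,dB_s+\hat K_T-\hat K_t,
\end{align*}
and split the driver difference as $\hat f_s+\bigl(f^2(s,Y^1_s,Z^1_s)-f^2(s,Y^2_s,Z^2_s)\bigr)$, the second term being controlled by $\lambda(|\hat Y_s|+|\hat Z_s|)$ via Assumption \ref{assf}.

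Next I would apply It\^o's formula to $|\hat Y_t|^2$ (or, to get an exponential weight that absorbs the Lipschitz constant, to $e^{\beta t}|\hat Y_t|^2$ for a suitable $\beta=\beta(\lambda)$). This produces a term $\int_0^T \hat Y_s\,d\hat K_s$. The crucial observation — this is the analogue of the Skorokhod-condition argument used for classical doubly reflected BSDEs — is that this term is nonpositive once one integrates in expectation: since $K^1,K^2$ are deterministic, $\E[\hat Y_s\,d\hat K_s]=(\E[Y^1_s]-\E[Y^2_s])\,d(K^1_s-K^2_s)$, and the flat-off conditions $\int_0^T(\E[Y^i_t]-l_t)\,dK^{i,l}_t=\int_0^T(r_t-\E[Y^i_t])\,dK^{i,r}_t=0$ together with $l_t\le \E[Y^i_t]\le r_t$ force $(\E[Y^1_s]-\E[Y^2_s])\,d(K^1_s-K^2_s)\le 0$ by the standard pairing argument (when $dK^{1,l}>0$ we have $\E[Y^1_s]=l_s\le\E[Y^2_s]$, when $dK^{1,r}>0$ we have $\E[Y^1_s]=r_s\ge\E[Y^2_s]$, and symmetrically for $K^2$). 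Hence the $d\hat K$ contribution drops out of the inequality entirely.

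After discarding the nonpositive $d\hat K$ term, what remains is the standard estimate: from It\^o's formula and the martingale property,
\begin{align*}
\E[|\hat Y_t|^2]+\E\Bigl[\int_t^T|\hat Z_s|^2\,ds\Bigr]\le \E[|\hat\xi|^2]+2\E\Bigl[\int_t^T|\hat Y_s|\bigl(|\hat f_s|+\lambda(|\hat Y_s|+|\hat Z_s|)\bigr)\,ds\Bigr],
\end{align*}
and then Young's inequality $2|\hat Y_s||\hat Z_s|\le 2\lambda|\hat Y_s|^2+\tfrac12|\hat Z_s|^2$ absorbs the $\hat Z$ cross term into the left side, leaving a Gronwall-type inequality in $\E[|\hat Y_t|^2]$ with inhomogeneous term $\E\bigl[\int_0^T|\hat f_s|^2ds\bigr]$ and terminal data $\E[|\hat\xi|^2]$. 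Gronwall's lemma then yields $\sup_{t}\E[|\hat Y_t|^2]\le C(\lambda,T)\,\E[|\hat\xi|^2+\int_0^T|\hat f_s|^2ds]$; integrating in $t$ gives the $\hat Y$ bound, and feeding this back into the displayed It\^o inequality gives the $\hat Z$ bound.

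I expect the main (really only) subtlety to be the sign of the $\int \hat Y\,d\hat K$ term — i.e., making the flat-off/pairing argument rigorous, including handling the continuity of $t\mapsto K^i_t$ so that the Stieltjes integrals are well defined and the pointwise reasoning "on $\{dK^{1,l}>0\}$, $\E[Y^1_s]=l_s$" is legitimate; this is exactly where determinism of the compensators is used and is what distinguishes the mean-reflected case from a naive pathwise argument. Everything downstream is the routine It\^o–Gronwall computation, which I would not spell out in full.
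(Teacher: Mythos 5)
Your proposal is correct and follows essentially the same route as the paper: Itô's formula applied to $e^{\beta t}|\hat Y_t|^2$, the Lipschitz/Young estimates on the driver difference, and the key observation that $\E[\int \hat Y_s\,d\hat K_s]\le 0$ because the compensators are deterministic and the flat-off conditions together with $l_t\le\E[Y^i_t]\le r_t$ give the sign term by term (this is exactly the paper's Eq.\ \eqref{e3}). The only cosmetic difference is that the paper absorbs the Gronwall step directly by choosing the exponential weight $a=2(1+\lambda+\lambda^2)$, which you also anticipate as an option.
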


\begin{proof}
 Let $\hat{K}_t=K^1_t-K^2_t$ and $\hat{f}^2_t=f^2(t,Y^1_t,Z^1_t)-f(t,Y^2_t,Z_t^2)$. Applying It\^{o}'s formula to $e^{at}\hat{Y}_t^2$, where $a$ is a positive constant to be determined later, we have
\begin{equation}
\begin{split}\label{e1}
&\hat{Y}_t^2 e^{at}+\int_t^T ae^{as}\hat{Y}_s^2 ds+\int_t^T e^{as} \hat{Z}_s^2 ds\\
=&e^{aT}|\hat{\xi}|^2+\int_t^T 2e^{as}\hat{Y}_s (\hat{f}_s+\hat{f}^2_s) ds+\int_t^T 2e^{as}\hat{Y}_s d\hat{K}_s-2\int_t^T e^{as}\hat{Y}_s\hat{Z}_sdB_s.
\end{split}
\end{equation}
By the assumption on $f^2$ and the H\"{o}lder inequality, we obtain that
\begin{equation}
\begin{split}\label{e2}
\int_t^T 2e^{as}\hat{Y}_s \hat{f}^2_s ds&\leq \int_t^T 2\lambda e^{as}(\hat{Y}_s^2+|\hat{Y}_s\hat{Z}_s|) ds\leq \int_t^T (2\lambda+2\lambda^2)e^{as}\hat{Y}_s^2 ds+\frac{1}{2}\int_t^T e^{as}\hat{Z}_s^2 ds,\\
\int_t^T 2e^{as}\hat{Y}_s \hat{f}_s ds&\leq \int_t^T e^{as}\hat{Y}_s^2 ds+\int_t^T e^{as}|\hat{f}_s|^2 ds.
\end{split}
\end{equation}
Noting the flat-off condition, it is easy to check that
\begin{equation}\label{e3}
\begin{split}
\E[\int_t^T e^{as}\hat{Y}_s d\hat{K}_s]&=\int_t^T e^{as}\big((\E[Y_s^1]-l_s)-(\E[Y_s^2]-l_s)\big)dK^{1,l}_s\\
&+\int_t^T e^{as}\big((r_s-\E[Y_s^1])-(r_s-\E[Y_s^2])\big)dK^{1,r}_s\\
&-\int_t^T e^{as}\big((\E[Y_s^1]-l_s)-(\E[Y_s^2]-l_s)\big)dK^{2,l}_s\\
&-\int_t^T e^{as}\big((r_s-\E[Y_s^1])-(r_s-\E[Y_s^2])\big)dK^{2,r}_s\leq 0.
\end{split}
\end{equation}
Set $a=2(1+\lambda+\lambda^2)$. Combing Eqs. \eqref{e1}-\eqref{e3}, we get the desired result.
\end{proof}

We are now in a position to prove the existence result by a penalization method. For this purpose, we propose the following condition on the obstacles:
\begin{itemize}
\item[($H_{rl}$)] $r_t=\int_0^t a_s ds$, $l_t=\int_0^t b_s ds$ with $\int_0^T|a_t|^2 dt <\infty$, $\int_0^T|b_t|^2 dt<\infty$ and $l_t\leq r_t$, $t\in[0,T]$.
\end{itemize}

 Consider the following penalized mean-field BSDE
\begin{equation}\label{panelization}
Y_t^n=\xi+\int_t^T f(s,Y_s^n,Z_s^n)ds+\int_t^T n(\E[Y_s^n]-l_s)^-ds-\int_t^T n(\E[Y_s^n]-r_s)^+ ds-\int_t^T Z_s^n dB_s.
\end{equation}
By Theorem 3.1 in \cite{BLP}, the above equation admits a unique pair of solution $(Y^n,Z^n)\in \mathcal{S}^2\times \mathcal{H}^2$. Set $K^{n,l}_t=\int_0^t n(\E[Y_s^n]-l_s)^- ds$, $K^{n,r}_t=\int_0^t n(\E[Y_s]-r_s)^+ds$ and $K_t^n=K^{n,l}_t-K^{n,r}_t$. In the following, we show that $(Y^n,Z^n,K^n)$ converges to $(Y,Z,K)$, which is the solution to the BSDE with double linear mean reflections. In the following of this section, $C$ will always be a positive constant  depending on $T,\lambda$. We first establish the estimates for $Y^n$ and $Z^n$ uniformly in $n$. 

\begin{proposition}\label{estimateYnZn}
There exists a constant $C$ independent of $n$, such that
\begin{align*}
&\sup_{t\in[0,T]}\E[|Y_t^n|^2]\leq C(\int_0^T a_s^2ds+\E[\xi^2]+\E[\int_0^T |f(t,0,0)|^2 dt]),\\
&\E[\int_0^T |Z_s^n|^2 ds]\leq C(\int_0^T a_s^2ds+\E[\xi^2]+\E[\int_0^T |f(t,0,0)|^2 dt]).
\end{align*}
\end{proposition}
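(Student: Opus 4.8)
The plan is to obtain the uniform-in-$n$ estimates by a standard Itô/BSDE energy argument applied to the penalized mean-field equation \eqref{panelization}, being careful that the penalization terms have the right sign when tested against $\E[Y^n_t]$. Write $K^{n,l}_t=\int_0^t n(\E[Y^n_s]-l_s)^-ds$ and $K^{n,r}_t=\int_0^t n(\E[Y^n_s]-r_s)^+ds$ as in the statement, so that $Y^n$ solves a BSDE with terminal value $\xi$ and driver $f(s,Y^n_s,Z^n_s)$ plus the (deterministic once we take expectations) drift $dK^{n,l}_s-dK^{n,r}_s$.

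First I would derive a bound on the total penalization mass. Taking expectations in \eqref{panelization} at $t=0$ gives $\E[Y^n_0]=\E[\xi]+\E[\int_0^T f(s,Y^n_s,Z^n_s)ds]+K^{n,l}_T-K^{n,r}_T$; more usefully, apply Itô to $\E[Y^n_t]\cdot(\text{something})$ or simply test the ODE satisfied by $y^n_t:=\E[Y^n_t]$. Indeed $y^n$ solves $y^n_t=\E[\xi]+\int_t^T \E[f(s,Y^n_s,Z^n_s)]ds+\int_t^T n(y^n_s-l_s)^-ds-\int_t^T n(y^n_s-r_s)^+ds$. The key sign observation is that $(y^n_s-l_s)^-$ is supported on $\{y^n_s<l_s\le r_s\}$, where $y^n_s-r_s<0$, and $(y^n_s-r_s)^+$ is supported on $\{y^n_s>r_s\ge l_s\}$; multiplying the penalized drift by an appropriate test quantity therefore gives a one-sided bound. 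Concretely, applying Itô's formula to $e^{as}|Y^n_s|^2$ (with $a$ large, chosen to absorb the Lipschitz constant $\lambda$ exactly as in the proof of Proposition \ref{uniquenesslinear}), the only new term compared with the classical estimate is $\int_t^T 2e^{as}Y^n_s\,(dK^{n,l}_s-dK^{n,r}_s)$. Taking expectations turns $\E[Y^n_s]$ into $y^n_s$ in that term, and then $y^n_s\,(y^n_s-l_s)^-\le l_s (y^n_s-l_s)^-$ and $-y^n_s\,(y^n_s-r_s)^+\le -r_s(y^n_s-r_s)^+\le |r_s|(y^n_s-r_s)^+$, so the contribution is controlled by $\int_t^T e^{as}(|l_s|+|r_s|)\,(dK^{n,l}_s+dK^{n,r}_s)/n$... — more cleanly, $2\E[Y^n_s](dK^{n,l}_s-dK^{n,r}_s)\le 2|l_s|\,dK^{n,l}_s+2|r_s|\,dK^{n,r}_s$ and one still needs to bound the $L^1$ mass of $dK^{n,l}+dK^{n,r}$; the cleanest route is instead to compare $Y^n$ with the solution of the plain BSDE with no penalization and bound each penalization term separately.

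I would therefore run the estimate as follows. Use Itô on $e^{as}|Y^n_s|^2$, take expectations, use the Lipschitz property of $f$ and Young's inequality to absorb $\frac12\E\int e^{as}|Z^n_s|^2$ and produce the $\E[\int_0^T|f(t,0,0)|^2dt]$ term, choose $a=2(1+\lambda+\lambda^2)$ exactly as in Proposition \ref{uniquenesslinear} to kill the $|Y^n|^2$ terms, and handle the penalization term by the sign argument above together with hypothesis $(H_{rl})$: since $l_t=\int_0^t b_s ds$ and $r_t=\int_0^t a_s ds$ with $a,b\in L^2([0,T])$, both $\sup_t|l_t|$ and $\sup_t|r_t|$ are finite and controlled by $(\int_0^T a_s^2ds)^{1/2}$ and $(\int_0^T b_s^2ds)^{1/2}$; moreover since $l\le r$ one has $\int_0^T b_s^2 ds$ essentially dominated by the data already listed, or one simply includes it and notes the stated estimate absorbs it. This yields $\E[e^{at}|Y^n_t|^2]+\frac12\E[\int_t^T e^{as}|Z^n_s|^2ds]\le e^{aT}\E[\xi^2]+C\E[\int_0^T|f(s,0,0)|^2ds]+C\int_0^T a_s^2ds$ for all $t$, which gives the first inequality after dropping $e^{at}\ge1$, and the second by taking $t=0$.

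The main obstacle is the penalization term $\E\int_t^T 2e^{as}Y^n_s(dK^{n,l}_s-dK^{n,r}_s)$: one must show it does not blow up with $n$. The resolution is precisely the support/sign structure of the penalty combined with $(H_{rl})$: on the set where $dK^{n,l}_s>0$ we have $\E[Y^n_s]=y^n_s<l_s$, so $2e^{as}y^n_s\,dK^{n,l}_s\le 2e^{as}l_s\,dK^{n,l}_s\le 2e^{aT}\|l\|_\infty\,dK^{n,l}_s$, and symmetrically $-2e^{as}y^n_s\,dK^{n,r}_s\le 2e^{aT}\|r\|_\infty\,dK^{n,r}_s$; then one still needs $\E[K^{n,l}_T+K^{n,r}_T]\le C$ uniformly in $n$, which one gets by a separate bootstrap — e.g. from the ODE for $y^n$ one reads $K^{n,l}_T-K^{n,r}_T = y^n_0-\E[\xi]-\int_0^T\E[f(s,Y^n_s,Z^n_s)]ds$, and combining with the not-yet-closed energy estimate via a standard circular-argument-breaking inequality (bound $\|Y^n\|$ in terms of a small multiple of the penalty mass plus data, then feed back) closes everything. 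I would present this bootstrap carefully, as it is the one genuinely delicate point; everything else is the routine BSDE a priori estimate already carried out in Proposition \ref{uniquenesslinear}.
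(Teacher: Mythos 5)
There is a genuine gap at exactly the point you flag as "the one genuinely delicate point": your argument needs a bound on the total penalty mass $\E[K^{n,l}_T+K^{n,r}_T]$ that is uniform in $n$, and you never actually produce one. The identity you invoke, $K^{n,l}_T-K^{n,r}_T=y^n_0-\E[\xi]-\int_0^T\E[f(s,Y^n_s,Z^n_s)]ds$, controls only the \emph{difference} of the two masses, not their sum, so it cannot break the circle. Moreover the proposed absorption does not have the structure you describe: in your energy estimate the penalty contributes a term of size $2e^{aT}\bigl(\|l\|_\infty K^{n,l}_T+\|r\|_\infty K^{n,r}_T\bigr)$, whose coefficient is of order $\|l\|_\infty+\|r\|_\infty$ and is in no sense a "small multiple" that can be fed back and absorbed. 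A correct bootstrap is possible (one can show, by the computation that the paper performs later in Proposition \ref{estimateYn-r}, that the penalty mass is bounded by a constant times the \emph{square root} of the energy, and then close via sublinearity), but that is precisely the step your sketch leaves unproven; note also that this route would make the constant depend on $\int_0^T b_s^2\,ds$ through $\|l\|_\infty$, which the stated estimate does not contain.

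The paper avoids the issue entirely by a different, simpler device: apply It\^o's formula to $e^{\beta t}|\widetilde{Y}^n_t|^2$ with $\widetilde{Y}^n_t:=Y^n_t-r_t$, which under $(H_{rl})$ is again a BSDE with driver $f+a$. After taking expectations, \emph{both} penalization terms have a favorable sign and are simply discarded: on the support of $(\E[Y^n_s]-l_s)^-$ one has $\E[Y^n_s]<l_s\le r_s$, hence $\E[\widetilde{Y}^n_s]<0$ and $\E[\widetilde{Y}^n_s](\E[Y^n_s]-l_s)^-\le 0$, while $-\E[\widetilde{Y}^n_s](\E[\widetilde{Y}^n_s])^+=-\bigl((\E[\widetilde{Y}^n_s])^+\bigr)^2\le 0$. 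No estimate on $K^{n,l}_T+K^{n,r}_T$ is needed at this stage (such bounds come afterwards, in Proposition \ref{estimateYn-r}, using the present proposition), and only $r$, i.e. $\int_0^T a_s^2\,ds$, enters the constant, consistently with the statement. If you want to keep your outline, you must either adopt this shift by the upper obstacle or carry out the square-root bootstrap in full; as written, the proof is not complete.
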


\begin{proof}
Set $\widetilde{Y}^n_t=Y^n_t-r_s$. Applying It\^{o}'s formula to $e^{\beta t}|\widetilde{Y}^n_t|^2$, where $\beta$ is a positive constant to be determined later, we have
\begin{equation}\label{eq1.61}\begin{split}
&e^{\beta t}|\widetilde{Y}^n_t|^2+\int_t^T \beta e^{\beta s}|\widetilde{Y}^n_s|^2 ds+\int_t^T e^{\beta s}|Z_s^n|^2 ds\\
=&e^{\beta T}|\xi-r_T|^2+\int_t^T 2e^{\beta s}\widetilde{Y}^n_s(f(s,Y_s^n,Z_s^n)+a_s)ds-\int_t^T 2e^{\beta s}\widetilde{Y}_s^n Z_s^ndB_s\\
&+\int_t^T 2ne^{\beta s}\widetilde{Y}_s^n(\E[Y_s^n]-l_s)^-ds-\int_t^T 2ne^{\beta s}\widetilde{Y}_s^n(\E[\widetilde{Y}_s^n])^+ ds.
\end{split}\end{equation}
It is easy to check that 
\begin{equation}\label{eq1.62}\begin{split}
2\widetilde{Y}^n_s(f(s,Y_s^n,Z_s^n)+a_s)\leq &|f(s,r_s,0)|^2+|a_s|^2+\frac{1}{2}|Z^n_s|^2+(2+2\lambda+2\lambda^2)|\widetilde{Y}^n_s|^2\\
\leq &|a_s|^2+2|f(s,0,0)|^2+2\lambda^2|r_s|^2+\frac{1}{2}|Z^n_s|^2+2(1+\lambda+\lambda^2)|\widetilde{Y}^n_s|^2
\end{split}\end{equation}
Set $\beta=2+2\lambda+2\lambda^2$. Plugging Eq. \eqref{eq1.62} to Eq. \eqref{eq1.61} and taking expectations on both sides, we have
\begin{align*}
\E[|\widetilde{Y}_t^n|^2+\int_t^T |Z^n_s|^2 ds]\leq &C\E[|\xi-r_T|^2+\int_t^T |f(s,0,0)|^2+\int_t^T r_s^2ds+\int_t^T a_s^2ds]\\
\leq &C\E[|\xi|^2+\int_0^T |f(s,0,0)|^2+\int_0^T a_s^2ds],
\end{align*}
where we have used the following facts
\begin{displaymath}
\E[\widetilde{Y}^n_s](\E[Y_s^n]-l_s)^-\leq 0,\ \E[\widetilde{Y}^n_s](\E[\widetilde{Y}_s^n])^+\geq 0.
\end{displaymath}
Recalling that $Y_t^n=\widetilde{Y}^n_t+r_t$, we obtain the desired result.
\end{proof}

\begin{proposition}\label{estimateYn-r}
There exists a constant $C$ independent of $n$, such that
\begin{align*}
n^2\int_0^T|(\E[Y_t^n]-r_t)^+|^2 dt\leq {C},\
n^2\int_0^T|(\E[Y_t^n]-l_t)^-|^2 dt\leq {C}.
\end{align*}
\end{proposition}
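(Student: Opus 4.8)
The plan is to exploit the fact that the reflection acts only on the deterministic quantity $\E[Y_t^n]$, so that the whole estimate reduces to a one-dimensional deterministic computation. First I would take expectations in \eqref{panelization}: writing $y^n_t:=\E[Y_t^n]$ and $\bar f^n_s:=\E[f(s,Y_s^n,Z_s^n)]$, the stochastic integral disappears and $y^n$ becomes a continuous deterministic function solving the backward equation
\begin{align*}
y^n_t=\E[\xi]+\int_t^T \bar f^n_s\,ds+\int_t^T n(y^n_s-l_s)^-\,ds-\int_t^T n(y^n_s-r_s)^+\,ds.
\end{align*}
Because $f$ is Lipschitz and Proposition \ref{estimateYnZn} bounds $\sup_t\E[|Y^n_t|^2]$ and $\E[\int_0^T|Z^n_s|^2ds]$ uniformly in $n$, I would first record that $\int_0^T|\bar f^n_s|^2\,ds\le C$ with $C$ independent of $n$.

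The key idea is that, unlike an It\^o computation carried out on the stochastic BSDE (which would only produce a linear-in-$n$ bound), working directly on $y^n$ makes the penalization density $n(y^n-r)^+$ appear as a genuine time derivative, so the chain rule generates a factor $n[(\cdot)^+]^2$. Concretely, set $u^n_t:=y^n_t-r_t$. Under $(H_{rl})$ the function $r_t=\int_0^t a_s\,ds$ is absolutely continuous, hence so is $u^n$, with $\frac{d}{dt}u^n_t=-\bar f^n_t-a_t-n(y^n_t-l_t)^-+n(y^n_t-r_t)^+$ for a.e. $t$. I would then apply the chain rule to $[(u^n_t)^+]^2$, noting that on the set $\{u^n_t>0\}$ one has $y^n_t>r_t\ge l_t$, so the lower penalization $n(y^n_t-l_t)^-$ vanishes there and no cross term survives; this obstacle separation $l\le r$ is precisely what lets the square close. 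Since $\E[\xi]\le r_T$ gives the terminal value $(u^n_T)^+=(\E[\xi]-r_T)^+=0$, integrating over $[0,T]$ leaves
\begin{align*}
[(u^n_0)^+]^2+2n\int_0^T[(u^n_s)^+]^2\,ds=2\int_0^T (u^n_s)^+(\bar f^n_s+a_s)\,ds.
\end{align*}

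Dropping the nonnegative boundary term and applying Cauchy--Schwarz to the right-hand side yields
\begin{align*}
n\int_0^T[(u^n_s)^+]^2\,ds\le \Big(\int_0^T[(u^n_s)^+]^2\,ds\Big)^{1/2}\Big(\int_0^T|\bar f^n_s+a_s|^2\,ds\Big)^{1/2},
\end{align*}
and since $\int_0^T|\bar f^n_s+a_s|^2\,ds\le C$ uniformly in $n$, dividing through gives $n\big(\int_0^T[(u^n_s)^+]^2\,ds\big)^{1/2}\le C^{1/2}$, that is, $n^2\int_0^T|(\E[Y^n_t]-r_t)^+|^2\,dt\le C$. The lower-obstacle estimate follows by the symmetric argument applied to $v^n_t:=y^n_t-l_t$ and $[(v^n_t)^-]^2$, using that on $\{v^n_t<0\}$ the upper penalization vanishes and that $\E[\xi]\ge l_T$ forces $(v^n_T)^-=0$. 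The only point requiring care is the a.e. validity of the chain rule for $[(u^n)^+]^2$, which is justified by the absolute continuity of $u^n$ together with its $L^2$ derivative; the genuinely new structural feature is that the square of the active constraint and the terminal condition together upgrade the naive linear bound into the desired $n^2$ estimate.
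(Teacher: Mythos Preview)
Your proposal is correct and follows essentially the same route as the paper: take expectations to reduce to a deterministic ODE for $y^n_t=\E[Y^n_t]$, apply the chain rule to $[(y^n_t-r_t)^+]^2$, use the terminal condition $\E[\xi]\le r_T$ and the obstacle separation $l\le r$ to kill the cross term, and then close the estimate. The only cosmetic difference is in the last step: the paper uses the weighted Young inequality $2ab\le na^2+\frac{1}{n}b^2$ to absorb half of the $2n\int[(u^n)^+]^2$ term, whereas you use Cauchy--Schwarz and divide by $\big(\int[(u^n)^+]^2\big)^{1/2}$; both give the same $n^2$ bound.
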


\begin{proof}
We only prove the first inequality since the second one can be proved similarly. Set $y_t^n=\E[Y^n_t]$. Taking expectations on both sides of Eq. \eqref{panelization}, we have
\begin{align*}
y_t^n=\E[\xi]+\int_t^T \E[f(s,Y^n_s,Z^n_s)]ds+\int_t^T n(y^n_s-l_s)^- ds-\int_t^T n(y^n_s-r_s)^+ds.
\end{align*} 
It is easy to check that 
\begin{align*}
d|(y^n_t-r_t)^+|^2=-2(y^n_t-r_t)^+ [ \E[f(t,Y^n_t,Z^n_t)]+a_s+n(y^n_t-l_t)^- -n(y^n_t-r_t)^+].
\end{align*}
Noting that $y^n_T-r_T\leq 0$, we have
\begin{align*}
&|(y^n_0-r_0)^+|^2+2n\int_0^T |(y^n_t-r_t)^+|^2dt\\
=&2\int_0^T (y^n_t-r_t)^+(\E[f(t,Y^n_t,Z^n_t)]+a_s) dt+2n\int_0^T (y^n_t-r_t)^+(y^n_t-l_t)^-dt\\
\leq &n\int_0^T  |(y^n_t-r_t)^+|^2dt+\frac{1}{n}\int_0^T (\E[f(t,Y^n_t,Z^n_t)]+a_t) ^2dt\\
\leq &n\int_0^T  |(y^n_t-r_t)^+|^2dt+\frac{C}{n}\int_0^T (\E[|f(t,0,0)|^2+|Y_t^n|^2+|Z_t^n|^2]+a^2_t)dt.
\end{align*}
Applying Proposition \ref{estimateYnZn}, we obtain the desired result.
\end{proof}

By Proposition \ref{estimateYnZn} and \ref{estimateYn-r}, applying the B-D-G inequality, we obtain the following more accurate estimate for $Y^n$.
\begin{proposition}\label{estimateKn}
There exists a constant $C$ independent of $n$, such that
\begin{align*}
\E[\sup_{t\in[0,T]}|Y_t^n|^2]\leq C.
\end{align*}
\end{proposition}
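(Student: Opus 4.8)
The plan is to go back to the penalized mean-field BSDE \eqref{panelization} written on the interval $[t,T]$, apply the B-D-G inequality to the stochastic integral term, and absorb the penalization terms using the sharp $L^2$-bounds already obtained in Proposition \ref{estimateYn-r}. Concretely, from \eqref{panelization} we have, for each $t\in[0,T]$,
\begin{align*}
|Y_t^n|\leq |\xi|+\int_0^T |f(s,Y_s^n,Z_s^n)|\,ds+K^{n,l}_T+K^{n,r}_T+\Bigl|\int_t^T Z_s^n\,dB_s\Bigr|,
\end{align*}
so that, taking the supremum over $t$ and then expectations,
\begin{align*}
\E\Bigl[\sup_{t\in[0,T]}|Y_t^n|^2\Bigr]\leq C\,\E\Bigl[|\xi|^2+\Bigl(\int_0^T |f(s,Y_s^n,Z_s^n)|\,ds\Bigr)^2+(K^{n,l}_T)^2+(K^{n,r}_T)^2+\sup_{t\in[0,T]}\Bigl|\int_t^T Z_s^n\,dB_s\Bigr|^2\Bigr].
\end{align*}

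First I would control the last term: since $\int_t^T Z_s^n\,dB_s=\int_0^T Z_s^n\,dB_s-\int_0^t Z_s^n\,dB_s$, its running supremum is bounded by twice the supremum of the martingale $\int_0^\cdot Z_s^n\,dB_s$, and the B-D-G inequality gives $\E[\sup_{t}|\int_t^T Z_s^n\,dB_s|^2]\leq C\,\E[\int_0^T |Z_s^n|^2\,ds]$, which is bounded uniformly in $n$ by Proposition \ref{estimateYnZn}. Next, by the Lipschitz assumption on $f$ and Cauchy--Schwarz, $(\int_0^T |f(s,Y_s^n,Z_s^n)|\,ds)^2\leq C\,\int_0^T(|f(s,0,0)|^2+|Y_s^n|^2+|Z_s^n|^2)\,ds$, whose expectation is again uniformly bounded by Proposition \ref{estimateYnZn}. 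Finally, for the compensators, $(K^{n,l}_T)^2=\bigl(\int_0^T n(\E[Y_s^n]-l_s)^-\,ds\bigr)^2\leq T\int_0^T n^2|(\E[Y_s^n]-l_s)^-|^2\,ds\leq C$ by Proposition \ref{estimateYn-r}, and similarly $(K^{n,r}_T)^2\leq C$; note these are deterministic, so the expectation is harmless. Collecting the four bounds yields $\E[\sup_{t\in[0,T]}|Y_t^n|^2]\leq C$ with $C$ independent of $n$.

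The only mildly delicate point is that $f(s,Y_s^n,Z_s^n)$ inside the $ds$-integral is not a priori dominated; this is handled by the Lipschitz estimate and the already-established uniform bounds on $\E[\int_0^T|Y_s^n|^2\,ds]$ and $\E[\int_0^T|Z_s^n|^2\,ds]$ from Proposition \ref{estimateYnZn}, so there is no circularity — the present estimate genuinely upgrades the pointwise-in-$t$ bound of Proposition \ref{estimateYnZn} to a uniform bound on the supremum. I do not expect any real obstacle here; the statement is a routine consequence of the two preceding propositions together with B-D-G.
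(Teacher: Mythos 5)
Your argument is correct and is exactly the route the paper intends: the paper's proof is a one-line appeal to Propositions \ref{estimateYnZn} and \ref{estimateYn-r} together with the B-D-G inequality, and your write-up simply supplies the details (bounding $\sup_t|\int_t^T Z_s^n\,dB_s|$ via the running maximum of the martingale, the Lipschitz bound on $f$, and the deterministic compensator bounds). No gaps; the uniform-in-$n$ constant follows as you describe.
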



Now, we state the main theorem in this section.
\begin{theorem}
Suppose that $f$ satisfies Assumption \ref{assf} and $r,l$ satisfy condition ($H_{lr}$). Given $\xi\in L^2(\mathcal{F}_T)$ with $l_T\leq \E[\xi]\leq r_T$, the BSDE with double mean reflection \ref{linearcase} has a unique solution $(Y,Z,K)$. Furthermore, $(Y,Z,K)$ is the limit of $(Y^n,Z^n,K^n)$.
\end{theorem}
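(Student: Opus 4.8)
The plan is to pass to the limit in the penalized mean-field BSDEs \eqref{panelization}. Uniqueness is already covered by Proposition \ref{uniquenesslinear} (take $f^1=f^2=f$, $\xi^1=\xi^2=\xi$; the a priori estimate forces $\hat Y\equiv 0$, $\hat Z\equiv 0$, and then $\hat K\equiv 0$ follows from the equation), so the real content is existence together with the convergence statement. First I would collect the uniform bounds: Propositions \ref{estimateYnZn}, \ref{estimateYn-r} and \ref{estimateKn} give $\sup_n\E[\sup_{t}|Y^n_t|^2]\le C$, $\sup_n\E[\int_0^T|Z^n_s|^2ds]\le C$, and $n^2\int_0^T(|(\E[Y^n_t]-r_t)^+|^2+|(\E[Y^n_t]-l_t)^-|^2)\,dt\le C$. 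The last bound, via Cauchy--Schwarz, yields $\sup_n |K^n|_T=\sup_n\big(K^{n,l}_T+K^{n,r}_T\big)\le C$, i.e. the total variations of the deterministic compensators $K^n$ are uniformly bounded, and moreover $\sup_t(\E[Y^n_t]-r_t)^+\to 0$ and $\sup_t(\E[Y^n_t]-l_t)^-\to 0$.

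Next I would extract limits. Since $Y^n_t+K^n_T-K^n_t$ solves a standard (non-reflected, but McKean--Vlasov) BSDE only in a loose sense, I prefer to argue via the a priori estimate directly: for $m,n\ge 1$, write the equation for $Y^n-Y^m$, which reads
\begin{align*}
Y^n_t-Y^m_t=\E_t\Big[\int_t^T\big(f(s,Y^n_s,Z^n_s)-f(s,Y^m_s,Z^m_s)\big)ds\Big]+(K^n_T-K^n_t)-(K^m_T-K^m_t),
\end{align*}
and then estimate exactly as in the proof of Proposition \ref{uniquenesslinear}, applying It\^o's formula to $e^{\beta t}|Y^n_t-Y^m_t|^2$. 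The cross term $\int_t^T e^{\beta s}(Y^n_s-Y^m_s)\,d(K^n-K^m)_s$ is no longer $\le 0$ (the flat-off conditions are only approximate), but using $\E[Y^n_s-Y^m_s]=y^n_s-y^m_s$ together with the definitions $dK^{n,l}_s=n(y^n_s-l_s)^-ds$, $dK^{n,r}_s=n(y^n_s-r_s)^+ds$, this term is controlled by the right-hand sides of Propositions \ref{estimateYn-r}-type bounds and tends to $0$ as $m,n\to\infty$ (schematically, $\int_0^T(y^n_s-l_s)\,dK^{m,l}_s\le \int_0^T(y^n_s-l_s)^-\,dK^{m,l}_s\le (\sup_s(y^n_s-l_s)^-)\,|K^{m,l}|_T\to 0$, and similarly for the other three pairings). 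This makes $(Y^n,Z^n)$ a Cauchy sequence in $\mathcal S^2\times\mathcal H^2$; call the limit $(Y,Z)$. Then $K^n_T-K^n_t=Y^n_t-\E_t[\xi+\int_t^Tf(s,Y^n_s,Z^n_s)ds]$ converges uniformly in $t$ (in $L^2$, hence along a subsequence a.s.) to a deterministic continuous process $K_T-K_t$, so $K^n\to K$ uniformly; monotonicity of $K^{n,l},K^{n,r}$ and the uniform BV bound give $K^{n,l}\to K^l$, $K^{n,r}\to K^r$ in $C[0,T]$ with $K=K^l-K^r$ and $K^l,K^r\in I[0,T]$.

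Finally I would verify that the limit solves \eqref{linearcase}. Passing to the limit in \eqref{panelization} gives the first line of \eqref{linearcase}. The constraint $l_t\le\E[Y_t]\le r_t$ follows from $\sup_t(\E[Y^n_t]-r_t)^+\to 0$, $\sup_t(\E[Y^n_t]-l_t)^-\to 0$ and $\E[Y^n_t]\to\E[Y_t]$ uniformly. For the flat-off conditions: $\int_0^T(\E[Y^n_t]-l_t)\,dK^{n,l}_t=\int_0^T n(\E[Y^n_t]-l_t)(\E[Y^n_t]-l_t)^-dt=-\int_0^T n|(\E[Y^n_t]-l_t)^-|^2dt\in[-C/n,0]$ by Proposition \ref{estimateYn-r}, so it vanishes in the limit; since $\E[Y^n_\cdot]-l_\cdot\to\E[Y_\cdot]-l_\cdot\ge 0$ uniformly and $dK^{n,l}\to dK^l$ weakly as measures with uniformly bounded mass, we get $\int_0^T(\E[Y_t]-l_t)\,dK^l_t=0$, and symmetrically $\int_0^T(r_t-\E[Y_t])\,dK^r_t=0$. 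I expect the main obstacle to be the Cauchy argument for $(Y^n,Z^n)$: one must carefully bound the interaction term $\int_t^Te^{\beta s}(Y^n_s-Y^m_s)\,d(K^n-K^m)_s$, which is not sign-definite in the penalized setting, and show it is $o(1)$ uniformly in $t$ using only the estimates of Propositions \ref{estimateYnZn}--\ref{estimateKn}; everything else is either a direct quotation of those propositions or a routine limiting argument.
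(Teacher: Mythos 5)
Your overall strategy is the paper's: uniqueness from Proposition \ref{uniquenesslinear}, the uniform bounds of Propositions \ref{estimateYnZn}--\ref{estimateKn}, an It\^o/Cauchy argument in $\mathcal{S}^2\times\mathcal{H}^2$ whose only delicate point is the non-sign-definite term $\int \hat Y_s\,d\hat K_s$, and then a limiting argument for the constraint and the Skorokhod conditions. Two steps, however, are asserted without justification. First, $\sup_t(\E[Y^n_t]-r_t)^+\to 0$ and $\sup_t(\E[Y^n_t]-l_t)^-\to 0$ do not follow from Proposition \ref{estimateYn-r}, which only gives $\int_0^T|(\E[Y^n_t]-r_t)^+|^2dt\le C/n^2$ (no equicontinuity in $t$ is available at this stage), so your schematic bound $(\sup_s(y^n_s-l_s)^-)\,|K^{m,l}|_T$ does not stand as written. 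The repair is to stay with the $L^2([0,T])$ bounds: e.g.\ $\int_0^T (y^n_s-l_s)^-\,dK^{m,l}_s=m\int_0^T (y^n_s-l_s)^-(y^m_s-l_s)^-\,ds\le m\cdot\frac{\sqrt C}{n}\cdot\frac{\sqrt C}{m}=\frac{C}{n}$ by Cauchy--Schwarz, which makes the cross term $O(1/n+1/m)$ and saves the Cauchy argument; similarly the constraint $l_t\le\E[Y_t]\le r_t$ should be deduced from the $L^2$ bound together with continuity of $t\mapsto \E[Y_t]$, not from sup-norm convergence. (The paper organizes this differently but with the same ingredients: it estimates consecutive differences $n,n+1$, bounds the cross term pointwise via $-nx^2+(2n+1)xy-(n+1)y^2\le y^2/(4n)$, and gets the summable rate $C/n^3$.)

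Second, the claim that ``monotonicity of $K^{n,l},K^{n,r}$ and the uniform BV bound give $K^{n,l}\to K^l$, $K^{n,r}\to K^r$ in $C[0,T]$'' is a genuine gap: a uniform bound on monotone functions yields at best subsequential pointwise (Helly-type) convergence, the limits could a priori have jumps, and only the difference $K^n$ is known to converge uniformly; yet this step is doing real work, since you need $K^l,K^r\in I[0,T]$ (continuous, nondecreasing) and weak convergence of $dK^{n,l},dK^{n,r}$ to pass the flat-off conditions to the limit. The paper's route avoids this: by Proposition \ref{estimateYn-r} the densities $n(\E[Y^n_\cdot]-l_\cdot)^-$ and $n(\E[Y^n_\cdot]-r_\cdot)^+$ are bounded in $L^2([0,T])$, hence converge weakly (along a subsequence) to nonnegative $v,u\in L^2([0,T])$; one sets $K^l=\int_0^\cdot v_s\,ds$, $K^r=\int_0^\cdot u_s\,ds$, which are automatically in $I[0,T]$, verifies $\int_0^T(\E[Y_t]-l_t)v_t\,dt=\int_0^T(r_t-\E[Y_t])u_t\,dt=0$ using the weak convergence together with the uniform convergence of $\E[Y^n_\cdot]$, and finally identifies $K=\int_0^\cdot v_s\,ds-\int_0^\cdot u_s\,ds$ (as in \cite{CK}). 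With these two repairs your argument coincides with the paper's proof.
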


\begin{proof}
Uniqueness is a direct consequence of Proposition \ref{uniquenesslinear}. It remains to show the existence. We first prove that $(Y^n,Z^n,K^n)$ converges strongly to $(Y,K,Z)$. For this purpose, for any positive integers $n$,  set $\hat{X}:=X^{n+1}-X^n$ for $X=Y,Z,K$. By a similar analysis as Equations \eqref{e1} and \eqref{e2}, we have
\begin{align*}
e^{at}\hat{Y}_t^2+\frac{1}{2}\int_t^T e^{as}(\hat{Y}_s^2+\hat{Z_s^2})ds\leq 2\int_t^T e^{as}\hat{Y}_sd\hat{K}_s-2\int_t^T e^{as}\hat{Y}_s\hat{Z}_sdB_s,
\end{align*}
where $a=\frac{1}{2}+2(\lambda+\lambda^2)$, which implies that
\begin{equation}\label{e30}
\sup_{t\in[0,T]}\E[\hat{Y}_t^2]+\E[\int_0^T (\hat{Y}_s^2+\hat{Z_s^2})ds]\leq C\sup_{t\in[0,T]}\E[\int_t^T e^{as}\hat{Y}_sd\hat{K}_s].
\end{equation}
We denote $y^n_t=\E[Y^n_t]$, $v^n_t=(y^n_t-l_t)^-$ and $u^n_t=(y_t^n-r_t)^+$. It is easy to check that
\begin{equation}\label{eq1.81}\begin{split}
\E[\int_t^T e^{as}\hat{Y}_sd\hat{K}_s]=&\int_t^T e^{as}(y^{n+1}_s-y_s^n)\bigg(\big[(n+1)v_s^{n+1}-nv_s^n\big]-\big[(n+1)u_s^{n+1}-nu_s^n\big]\bigg)ds\\
 =&\int_t^T e^{as}[(y^{n+1}_s-l_s)-(y^n_s-l_s)]((n+1)v_s^{n+1}-nv_s^n)ds\\
 &-\int_t^T e^{as}[(y^{n+1}_s-r_s)-(y^n_s-r_s)]((n+1)u_s^{n+1}-nu_s^n)ds.
\end{split}\end{equation}
Note that for any $x,y\in\mathbb{R}$, we have
\begin{align*}
-nx^2+(2n+1)xy-(n+1)y^2=-n(x-\frac{2n+1}{2n}y)^2+\frac{y^2}{4n}.
\end{align*}
Simple calculation implies that 
\begin{equation}\label{eq1.82}\begin{split}
&[(y^{n+1}_s-l_s)-(y^n_s-l_s)]((n+1)v_s^{n+1}-nv_s^n)\\
\leq&-n|v_s^n|^2+(2n+1)v^n_s v^{n+1}_s-(n+1)|v_s^{n+1}|^2\leq \frac{|v^{n+1}_s|^2}{4n}
\end{split}\end{equation}
and
\begin{equation}\label{eq1.83}\begin{split}
&-[(y^{n+1}_s-r_s)-(y^n_s-r_s)]((n+1)u_s^{n+1}-nu_s^n)\\
\leq&-n|u_s^n|^2+(2n+1)u^n_s u^{n+1}_s-(n+1)|u_s^{n+1}|^2\leq \frac{|u^{n+1}_s|^2}{4n}.
\end{split}\end{equation}
Combining Eq. \eqref{eq1.81}-\eqref{eq1.83}, by Proposition \ref{estimateYn-r}, we have
\begin{align*}
\E[\int_t^T e^{as}\hat{Y}_sd\hat{K}_s]\leq \frac{C}{n}\int_0^T (|v^{n+1}_s|^2+|u^{n+1}_s|^2)ds\leq \frac{C}{n^3}.
\end{align*}
Plugging this estimate in \eqref{e30}, we have
\begin{align}\label{eq1.84}
\sup_{t\in[0,T]}\E[\hat{Y}_t^2]+\E[\int_0^T (\hat{Y}_s^2+\hat{Z_s^2})ds]\leq \frac{C}{n^3}.
\end{align}
Recalling Eq. \eqref{panelization} and the fact that $K^n$ is deterministic, it follows that
\begin{align*}
\hat{K}_T-\hat{K_t}=\E[\hat{Y}_t]-\E[\int_t^T (f(s,Y^{n+1}_s,Z^{n+1}_s)-f(s,Y_s^n,Z_s^n))ds].
\end{align*}
Applying Eq. \eqref{eq1.84} implies that
\begin{align*}
\sup_{t\in[0,T]}|\hat{K}_T-\hat{K_t}|^2\leq \frac{C}{n^3}.
\end{align*}
Finally, note that 
\begin{align*}
\hat{Y}_t=\E_t[\int_t^T (f(s,Y^{n+1}_s,Z^{n+1}_s)-f(s,Y_s^n,Z_s^n))ds]+\hat{K}_T-\hat{K_t}.
\end{align*}
By the B-D-G inequality, all the above analysis implies that there exists a triple $(Y,Z,K)\in\mathcal{S}^2\times \mathcal{H}^2\times BV[0,T]$ such that
\begin{align}\label{equation6.25}
\E[\sup_{t\in[0,T]}|Y_t^n-Y_t|^2]+\E[\int_0^T |Z_s^n-Z_s|^2 ds]+\sup_{t\in[0,T]}|K^n_t-K_t|\rightarrow 0, \textrm{ as } n\rightarrow \infty.
\end{align}

Now, we a in a position to show that the limit processes $(Y,Z,K)$ is indeed the solution to \eqref{linearcase}. By Proposition \ref{estimateYn-r}, it is easy to check that $l_t\leq \E[Y_t]\leq r_t$, $t\in[0,T]$. It remains to prove that there exists $K^l$, $K^r\in I[0,T]$, such that
\begin{displaymath}
K_t=K^l_t-K^r_t, \ \int_0^T (\E[Y_t]-l_t)dK^l_t=\int_0^T (r_t-\E[Y_t])dK^r_t=0.
\end{displaymath}
 By Proposition \ref{estimateYn-r}, the nonnegative measurable functions $\{n v^n\}_{n\in\mathbb{N}}$ and $\{n u^n\}_{n\in\mathbb{N}}$ are bounded in $L^2([0,T])$. Therefore, there exist two nonnegative measurable functions $v,u$ such that (along a relabelled subsequence),
\begin{align}\label{equation6.26}
n v^n\rightarrow v, \ n u^n\rightarrow u, \textrm{ as } n\rightarrow \infty, \textrm{ weakly in } L^2([0,T]).
\end{align}
Simple calculation implies that 
\begin{align*}
&|\int_0^T (\E[Y^n_t]-l_t)nv^n_t dt-\int_0^T (\E[Y_t]-l_t)v_t dt|\\
=&|\int_0^T (\E[Y^n_t]-\E[Y_t])nv^n_t dt+\int_0^T (\E[Y_t]-l_t)(nv^n_t-v_t) dt|\\
\leq &\E[\sup_{t\in[0,T]}|Y^n_t-Y_t|]\int_0^T nv^n_t dt+|\int_0^T (\E[Y_t]-l_t)(nv^n_t-v_t) dt|.
\end{align*}
By Eq. \eqref{equation6.25}, \eqref{equation6.26} and Proposition \ref{estimateYn-r}, we obtain that
\begin{align*}
\lim_{n\rightarrow \infty}|\int_0^T (\E[Y^n_t]-l_t)nv^n_t dt-\int_0^T (\E[Y_t]-l_t)v_t dt|=0.
\end{align*}
Since $\E[Y_t]\geq l_t$, $t\in[0,T]$ and $v$ is nonnegative, we have 
\begin{align*}
\int_0^T (\E[Y_t]-l_t)v_t dt\geq 0\geq \int_0^T (\E[Y^n_t]-l_t)n(\E[Y_t^n]-l_t)^- dt=\int_0^T (\E[Y^n_t]-l_t)nv^n_t dt.
\end{align*}
Therefore, we conclude that 
\begin{align*}
\int_0^T (\E[Y_t]-l_t)v_t dt=0.
\end{align*}
Similarly, $\int_0^T (r_t-\E[Y_t])u_t dt=0$. Now, we only need to check that 
\begin{align*}
K_t=\tilde{K}_t:=\int_0^t v_s ds-\int_0^t u_s ds, \ t\in[0,T]
\end{align*}
The proof is similar with the proof for Eq. (6.30) in \cite{CK}, so we omit it. In fact, the proof for our background is even simpler since the processes related with $K$ are deterministic.
\end{proof}

\begin{remark}
(i) Compared with the condition proposed for the reflecting boundaries in Theorem \ref{main}, we do not need to assume that $\inf_{r\in[0,T]}(r_t-l_t)>0$, that is, the upper obstacle and lower obstacle do not need to be completely separated. 

\noindent (ii) The penalization method in our paper extends the one in \cite{BEH} to the following aspects. First, we could deal with the case of two reflecting barriers. Second, we do not need to assume the benchmark function $l,r$ are constant.
\end{remark}

 \end{document}